\documentclass{amsart}
\usepackage{rotating}
\usepackage{numprint}
\usepackage{graphicx}
\usepackage{siunitx}
\usepackage{booktabs}
\usepackage{xcolor}
\usepackage{comment}
\usepackage[utf8]{inputenc}
\usepackage[normalem]{ulem}
\usepackage{lscape}
\usepackage{hyperref}
\usepackage{csvsimple}
\usepackage{siunitx}
\usepackage{orcidlink}
\usepackage[colorinlistoftodos]{todonotes}



\newtheorem{theorem}{Theorem}[section]

\newtheorem{proposition}[theorem]{Proposition}
\theoremstyle{definition}

\newtheorem{conjecture}{Conjecture}[section]

\theoremstyle{remark}

\numberwithin{equation}{section}
\sisetup{round-mode=places,round-precision=6}


\newcommand{\mR}{\mathbb{R}}
\newcommand{\mT}{\mathbb{T}}

\newcommand{\mZ}{\mathbb{Z}}

\newcommand{\om}{\omega}

\newcommand{\te}{\theta}

\newcommand{\rot}{\mathrm{rot}}

\hyphenation{quasi-pe-rio-dic Schro-din-ger}

\begin{document}

\bibliographystyle{plain}

\title[Validated Gaps for the Almost Mathieu operator]{Computer Validation of Open Gaps for the Almost Mathieu Operator with Critical Coupling}


\author{Jordi-Lluís Figueras
}
\address{Departament de Mathematics. Uppsala Universitet. Uppsala (Sweden).}
\curraddr{}
\email{jordi-lluis.figueras@math.uu.se}
\thanks{}

\author{Joaquim Puig
}
\address{Department of Mathematics. Universitat Politècnica de Catalunya. Diagonal 647, 
08028 Barcelona (Spain).}
\curraddr{}
\email{joaquim.puig@upc.edu}
\thanks{}
\subjclass[2020]{34L40, 37C55, 65G30}

\keywords{Almost Mathieu Operator, Hofstadter butterfly, Quasi-Periodic Schr\"odinger operators, Cantor Spectrum, Quasi-Periodic Skew-Products and Cocycles, Validated Numerics, Computer-Assisted Proofs}

\begin{abstract}
We present some computer assisted methods to prove the existence of spectral gaps for the Almost Mathieu operator at critical coupling and give rigorous numerical estimates  on their size. As an example we show that the first 8 gaps predicted by the Gap Labelling theorem are open when the frequency $\omega$ is $\sqrt{5}-1)/2$ and 12 of them are open when $\omega=e-2$. A dynamical method based on the constructive conjugation to a hyperbolic cocycle and a spectral method based on the rigorous computation of the eigenvalues of finite-dimensional matrices are presented. We also present some experiments and conjectures on gap size for the associated periodic problems. 

\end{abstract}

\maketitle

\section{Introduction}

In the last decades quasi-periodic Schrödinger operators and the associated dynamical systems generated by their eigenvalue equations have been intensively studied as a valuable source of examples of different spectral and dynamical properties (see,the reviews \cite{Eliasson2009a,MARX2016a,johnson-etc:book, damanik-fillman:book}). Among these Schrödinger operators, a central focus of interest has been the Almost Mathieu Operator (AMO, see \cite{jitomirskaya2019critical} for a recent review and references therein), known also as Harper Operator, which acts on $l^2(\mZ)$ as
\begin{equation}\label{eq:AMO}
(H_{b,\omega,\theta}x)_n= x_{n+1}+x_{n-1}+ b\cos\left(2\pi{\left(\omega n+ \theta\right)}\right)x_n, \quad n \in \mZ.
\end{equation}
Here $\omega$ is called the \emph{frequency} of the quasi-periodic potential, $b$ the \emph{coupling} parameter, which is taken equal to $2$ in most of the present paper, and $\theta \in \mT=\mR/\mZ$ is a phase. 

A focus of interest has been the structure of the spectrum of the AMO in terms of $\omega$ and $b$, which is a compact subset of $\mR$. For irrational frequencies and nonzero coupling it was soon conjectured \cite{Azbel1964} and numerically observed \cite{Hofstadter1976} that the spectrum is a Cantor set: connected components of the resolvent set, the \emph{gaps}, are dense in the spectrum, see Figure \ref{fig:hofstadter}. This contrasts with the \emph{periodic case} of $\omega$ rational, where the spectrum, contrary to the case of $\omega$ irrational, depends on $\phi$ and it is the union of finitely many closed intervals, the \emph{spectral bands} (see \cite{damanik-fillman:book} for a general introduction to 1D ergodic Schrödinger operators).

The Cantor structure of the spectrum, which as a mathematical challenge was known as the \emph{Ten Martini Problem}, has been established in a series of papers \cite{bellissard-simon, Choi1990, puig:cantor, Avila2006, Avila2009} and references therein. Thanks to Aubry-André duality \cite{aubry-andre}, the spectrum for $|b|>2$ and the spectrum for $1/b$ are homothetic and the gap structure is the same. In the \emph{noncritical} case ($|b|\ne 2 $), the fact that all gaps predicted by the \emph{Gap Labelling Theorem} \cite{johnson:exponential} are open, a question called the \textit{Dry Ten Martini Problem}, has been proven for $\omega$ Diophantine \cite{Avila2010} and for all irrational frequencies \cite{avilaDryTenMartini2023}. In the \emph{critical} case $|b|=2$, although it is known that for irrational $\omega$ the spectrum is a Cantor set of zero Lebesgue measure \cite{last:zero,Avila2006}, and therefore infinitely many gaps are open, it is not known which gaps are open, even for Diophantine frequencies. The numerical plot of the spectrum versus the frequency, the well-known \emph{Hofstadter Butterfly} \cite{Hofstadter1976,papillon,Guillement1989,Osadchy2001} depicted in Figure 1 suggests an opening of all gaps for the irrational case. To our knowledge, the question of whether all possible gaps are open for the AMO remains open only in the critical case $|b|=2$ see \cite{Krasovsky2016,avilaDryTenMartini2023}. In particular, for a Diophantine $\omega$ such as the golden mean, it is not known whether a particular gap is open. 

The goal of this paper is to prove realistic bounds on the size of individual gaps of the AMO for critical coupling $b=2$ and irrational frequencies using rigorous numerics \cite{tucker:book}. The idea is the following: given a numerical approximation of a gap which can be obtained by non-rigorous means we will provide algorithms to validate it (i.e. proving that it actually represents a true gap) that reduce the problem to checking a large, but finite, number of strict inequalities. This last step can be checked rigorously by a computer using interval arithmetic. We will present two methods to validate spectral gaps: a \emph{dynamical method}, based on the dynamics of a Schrödinger cocycle inside a spectral gap, and a \emph{spectral method}, based on the validated computation of the eigenvalues of the finite dimensional matrices given by periodic approximations of the irrational frequency. 

We will show that 8 gaps of the AMO (as predicted by the Gap Labelling Theorem) are open for $b=2$ and $\omega= \frac{\sqrt{5}-1}{2}$ and 12  gaps are open for $\omega=e-2$. These specific values are chosen to exemplify the method and could be adapted to other particular values. We will also provide validated bounds for the upper and lower endpoints of these intervals, yielding an estimate of its width and an upper bound on the measure of the spectrum (which is known to be zero in this case). Finally, we investigate questions on the spectrum with rational frequencies regarding the ordering of gaps according to their length or the relation with the quantities involved appearing in Thouless conjecture.

\begin{figure}
\centering
\includegraphics[width=0.99\linewidth]{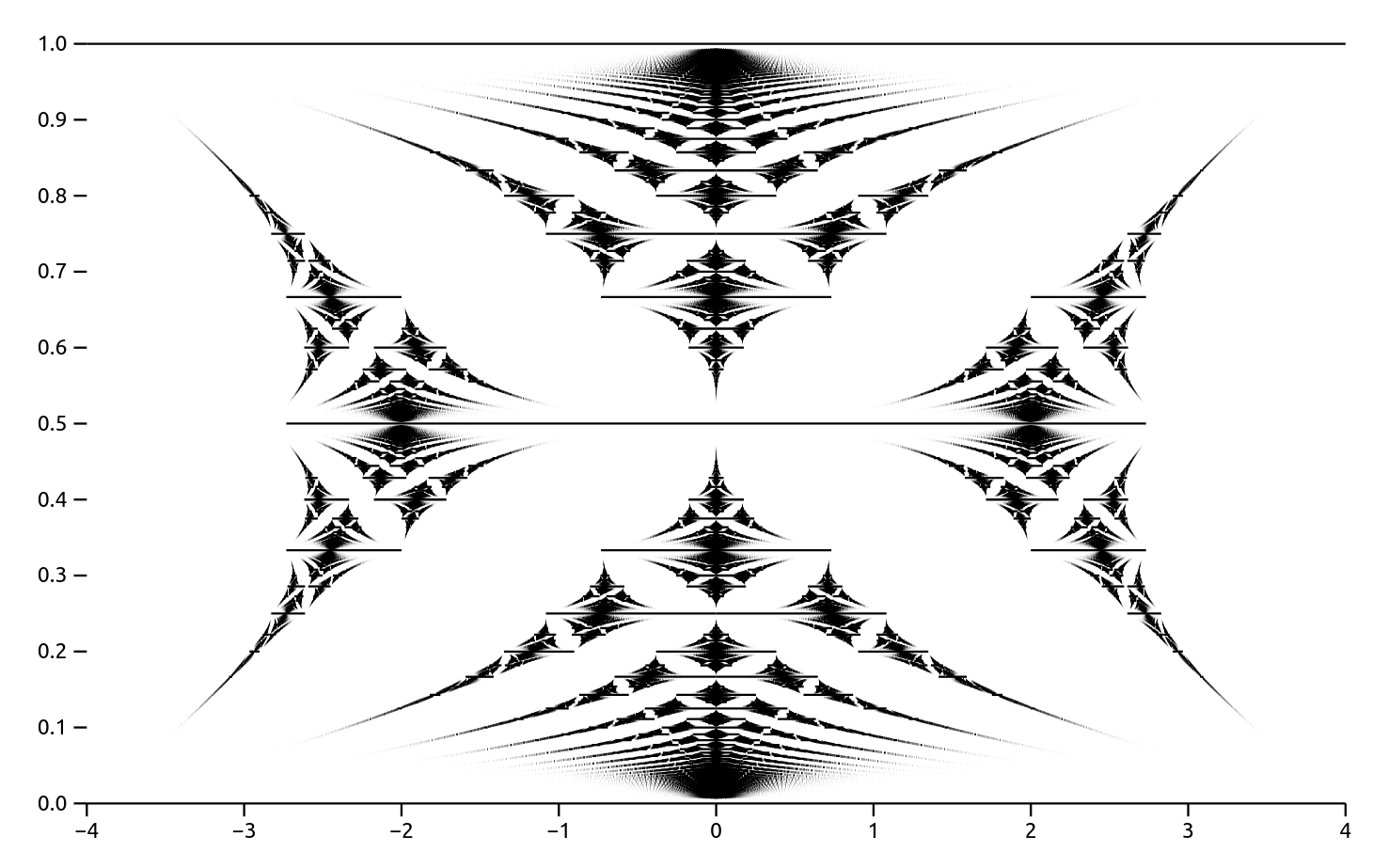}

\caption{Hofstadter butterfly: frequency on the vertical direction, spectrum on the horizontal direction.}
\label{fig:hofstadter}
\end{figure}

The paper is structured as follows. In the methods section we give the necessary background on the dynamical and spectral aspects of the problem, as well as an overview of the computer-assisted methods for the proof of the main results. In the results section we state the two main results on the existence of spectral gaps. We also describe some experiments on the size and scaling of gaps, in particular about the scaling of Thouless conjecture.   We end the paper discussing the limitations and the possible extensions of the results.

\section{Methods}

\subsection{Dynamics and Gaps of Quasi-Periodic Schrödinger operators}

In this section we summarize some of the theoretical results which will be needed in the rest of the paper. Our presentation is limited to the results which are relevant for our approach and we refer the interested reader to the bibliography given in the introduction. 

\subsubsection{Quasi-periodic cocycles and their reducibility}

In the validated numerical algorithms that we will present it is important to describe the dynamics of the eigenvalue equation (\ref{eq:AMOvaps}) or rather, its equivalent 1st order system,
\begin{equation}\label{eq:AMOskew}
\left(\begin{array}{c} x_{n+1} \\ x_{n} \end{array}\right)=
\left(
\begin{array}{cc}
a - b \cos(2\pi \theta_n)  & \;  -1 \\
1 & \; \phantom{-} 0
\end{array}
\right)
\left(\begin{array}{c} x_{n} \\ x_{n-1} \end{array}\right),\qquad
\theta_{n+1} = \theta_n + \om,
\end{equation}
when $a$ lies inside an open spectral gap.

Dynamically, first-order systems like (\ref{eq:AMOskew}), known as \emph{quasi-periodic skew-product systems}, are defined by the iteration of a map, called the \emph{cocycle},
\begin{equation}\label{eq:cocycle}
(v,\te) \in \mR^2 \times \mT \mapsto
\left(A_{a},\om\right)(v,\te)=
\left(A_{a}(\te)v,\te+\om\right) \in \mR^2 \times \mT,
\end{equation}
where
\[
A_{a}(\te)= \left( \begin{array}{cc}
a - b\cos(2\pi\te)  & -1 \\
1 & \phantom{-}0
\end{array} \right).  \]

We say that a cocycle $(A,\om)$ on $SL(2,\mR)$ is conjugated to another $(B,\omega)$, possibly not in the class of Schrödinger cocycles, if there exist continuous maps $Z,B:\mT \to SL(2,\mR)$ satisfying
\[ A(\te) Z(\te) = Z(\te + \om) B(\te), \qquad \te \in \mT. \]
A cocycle is \emph{reducible to constant coefficients} whenever a map $B$ can be found that it is independent of $\theta$, called Floquet matrix. This is a generalization of the concept of \emph{Floquet reducibility} for systems of linear   differential equations with periodic coefficients. Contrary to the periodic case, Floquet reducibility for quasi-periodic cocycles does not always hold (see, for example, \cite{Figueras_Haro_2013, karaliolios:global,you:icm2018}).

\subsubsection{The rotation number and the Gap Labelling Theorem}

The rotation number, $\rot(a,b)$, measures how non-trivial solutions of the eigenvalue equation of an AMO
\begin{equation}\label{eq:AMOvaps}
(H_{b,\omega,\theta}x)_n= x_{n+1}+x_{n-1}+ b\cos{\left(2\pi(\omega n+ \theta)\right)}x_n = a x_n, \quad n \in\mZ
\end{equation}
 wind around the origin \cite{johnson-moser,delyon-souillard}. It is related to the \emph{integrated density of states},  and can be defined for more general quasi-periodic cocycles on $SL(2,\mR)$ \cite{herman,krikorian:rigidity}. See  \cite{damanik-fillman:book} for references and a comprehensive introduction.

The rotation number for the AMO and an irrational frequency is a continuous, monotone function of $a$, which is only constant outside the spectrum of the AMO, $\Sigma_\omega$ (it is therefore independent of $\theta$ as long as $\omega$ is irrational). When $\omega=p/q$ is rational the spectrum depends on the phase $\theta$ and will be denoted as $\Sigma_{\omega}(\theta)$ with 
\begin{equation}\label{eq:periodicspectrum}
\Sigma_{\omega}= \bigcup_{\theta \in \mT} \Sigma_{\omega}(\theta).
\end{equation}

Moreover,  the open intervals in the resolvent set of the spectrum, the \emph{spectral gaps} where the rotation number is constant, are uniquely labelled by an integer called its \emph{label}. This is the content of the \emph{Gap Labelling Theorem} \cite{johnson-moser}, which asserts that inside every open gap in the resolvent set, there is a unique label $k \in \mZ$ such that the rotation number is \emph{resonant} with respect to $\omega$, that is, it is of the form $\{k\omega\}\in [0,1)$ where $\{\cdot\}$ denotes the fractional part of a real number (see Theorem 4.10.3 in \cite{damanik-fillman:book}) and the unique value $k$ is  called the \textit{label} of the gap.

Whenever the rotation number takes a resonant value for  single $a$ but it is stricly monotone at this value, we will call  the set $\{a\}$ \emph{collapsed gap}, and we will distinguish it from \emph{non-collapsed gaps},  the actual open intervals in the resolvent.

\subsubsection{Dynamics inside a spectral gap}

The dynamics of Schrödinger cocycles can display some rich behavior which is not found in their periodic counterparts (when $\omega$ is a rational number). For energies in the resolvent set (i.e. the complement of the spectrum), dynamics are well-described by \emph{uniform hyperbolicity} which has several equivalent definitions in our setting (cf. Theorem 3.8.2 in the book \cite{damanik-fillman:book}). Some of these definitions can be given in terms of the absence of nontrivial bounded solutions of the skew-product or the uniform exponential growth of the norm of the iterates of the cocycle. The concept of uniform hyperbolicity of cocycles or exponential dichotomy is very relevant in our approach: by virtue of Johnson's theorem \cite{johnson:exponential} $a$ lies in the spectrum if, and only if, the corresponding cocycle is uniformly hyperbolic (Theorem 4.9.3 in \cite{damanik-fillman:book}). 

It is therefore possible to establish the existence of an open spectral gap in a Schrödinger operator (e.g. the AMO) provided the cocycle is shown to be uniformly hyperbolic for a particular energy $a$.  It is well-known that uniform hyperbolicity is equivalent to the conjugacy to a \emph{projective} diagonal cocycle in $PSL(2,\mR)$ (see Theorem 3.8.2 in \cite{damanik-fillman:book}  and references therein). In order to prove conjugacy to a hyperbolic cocycle we will need  a  sufficient condition for uniform hyperbolicity \cite{haro-llave:numerical,Figueras_Haro_2012,Haro_Book_2016} which can be validated using rigorous numerics and is enough for our purposes.

\begin{theorem}\label{res:persistence}
Let $(A,\omega)$ be a  continuous cocycle on $SL(2,\mR)$ (for example the Schrödinger cocycle of the AMO) and a frequency $\omega \in \mR$. Assume that there exist continuous maps $P_1, P_2: \mT\to SL(2,\mR)$, two constants $\Lambda_{11},\Lambda_{22}\in\mathbb R$, satisfying ${0 < \Lambda_{11} < 1 < \Lambda_{22}}$, 
and constants $\sigma, \tau, \lambda$
such that the  following bounds hold for every $\theta \in \mT$:
\begin{itemize}
	\item[H1:] $\max_{ij}\left| (P_2(\theta+\omega) A(\theta)P_1(\theta) - \Lambda)_{ij}\right| <\sigma $, where $\Lambda= \operatorname{diag}(\Lambda_{11},\Lambda_{22})$;
	\item[H2:] $\max_{ij}\left| (P_1(\theta)P_2(\theta)-Id)_{ij}\right|< \tau$;
    \item[H3:] $\max(\Lambda_{11}, \Lambda_{22}^{-1})<\lambda$.
\end{itemize}

Then, if $\sigma+\lambda+\tau < 1$ we obtain that 
\begin{enumerate}
\item $(A,\omega)$ is uniformly hyperbolic. If $(A,\omega)$ is a Schrödinger cocycle with energy $a$, then $a$ is inside an open spectral gap.
\item There exists a continuous map $\tilde P:\mathbb T\rightarrow SL(2, \mathbb R)$ and continuous functions 
${\tilde \Lambda}_{11},{\tilde \Lambda}_{22}: \mT \to \mR$ such that 
\[
{\tilde P}(\theta+\omega)^{-1} A(\theta){\tilde P}(\theta) - \tilde\Lambda(\theta)=0, \quad
\tilde \Lambda= \operatorname{diag}(\tilde \Lambda_{11},\tilde \Lambda_{22}),
\]
The map $\tilde P$ encodes in its columns the stable and unstable directions.
\end{enumerate}
\end{theorem}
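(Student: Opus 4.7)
The plan is to turn the approximate reducibility encoded by $(P_1,P_2,\Lambda)$ into a true reducibility via an a-posteriori / Banach fixed point argument, in the spirit of the framework in \cite{Figueras_Haro_2012,Haro_Book_2016}. First, I would use H1 and H2 to change coordinates by $P_1$ alone: writing $P_1(\theta)P_2(\theta)=I+F(\theta)$ with $\|F\|_\infty<\tau$, the matrix $P_2$ is an approximate left inverse of $P_1$, and combined with H1 a short computation gives
\begin{equation*}
P_1(\theta+\om)^{-1}A(\theta)P_1(\theta)=\bigl(I+G(\theta+\om)\bigr)\bigl(\Lambda+E(\theta)\bigr),
\end{equation*}
where $\|E\|_\infty<\sigma$ and $\|G\|_\infty$ is controlled by $\tau$ using the $SL(2,\mR)$ structure (which forces $\det(I+F)=1$ and bounds $\|P_1^{-1}-P_2\|$). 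Thus, after an explicit change of variables we are in the setting of a hyperbolic diagonal cocycle perturbed by errors whose sizes are controlled by $\sigma$, $\tau$, and the hyperbolicity scale $\lambda$.

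Second, I would look for the true stable and unstable directions of $A$ as graphs in the $P_1$-frame: $v_s(\theta)=P_1(\theta)(1,s(\theta))^\top$ and $v_u(\theta)=P_1(\theta)(u(\theta),1)^\top$, with $s,u\in C(\mT,\mR)$ small. Imposing invariance $A(\theta)v_\bullet(\theta)=\mu_\bullet(\theta)\,v_\bullet(\theta+\om)$ and using the explicit form of the conjugated cocycle above produces functional equations of graph-transform type,
\begin{equation*}
s(\theta+\om)=\mathcal{T}_s[s](\theta),\qquad u(\theta+\om)=\mathcal{T}_u[u](\theta),
\end{equation*}
where each $\mathcal{T}_\bullet$ acts on $C(\mT,\mR)$. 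The linearization of $\mathcal{T}_s$ at $s\equiv 0$ contracts by a factor $\Lambda_{11}/\Lambda_{22}$, whose modulus is at most $\lambda^2<\lambda$ by H3, while the nonlinear corrections contributed by $E$ and $G$ are bounded on a small ball by quantities of order $\sigma+\tau$. The hypothesis $\sigma+\la+\tau<1$ is then calibrated precisely so that $\mathcal{T}_s$ (and symmetrically $\mathcal{T}_u$) leaves a small closed ball around $0$ invariant and acts as a contraction on it. The Banach fixed point theorem produces unique continuous solutions $s^\ast,u^\ast$.

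Third, I would assemble $\tilde P(\theta)$ by taking $P_1(\theta)(1,s^\ast(\theta))^\top$ and $P_1(\theta)(u^\ast(\theta),1)^\top$ as columns, renormalizing each column by a continuous scalar so that $\det\tilde P\equiv 1$; continuity of $\tilde P$ follows from the continuity of $P_1,s^\ast,u^\ast$, and the smallness of $s^\ast,u^\ast$ keeps the columns linearly independent so that $\tilde P(\theta)\in SL(2,\mR)$. The construction makes the relation $\tilde P(\theta+\om)^{-1}A(\theta)\tilde P(\theta)=\tilde \Lambda(\theta)$ hold with a diagonal $\tilde\Lambda=\operatorname{diag}(\tilde\Lambda_{11},\tilde\Lambda_{22})$, and the bounds $0<\tilde\Lambda_{11}<1<\tilde\Lambda_{22}$ follow from H3 together with the smallness of the perturbations. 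Uniform hyperbolicity of $(A,\om)$ is then immediate from the conjugacy to a diagonal hyperbolic cocycle; in the Schr\"odinger case, Johnson's theorem as cited in the excerpt gives that $a$ lies in an open spectral gap.

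The main technical obstacle is the bookkeeping: one must track how the errors $\sigma,\tau$ propagate through the nonlinear graph transform so that the contraction rate is genuinely dominated by $\la$ up to a correction bounded by $\sigma+\tau$, and this must be arranged so that the scalar hypothesis $\sigma+\la+\tau<1$ alone suffices both to define an invariant ball and to guarantee contraction there. A secondary subtlety, mentioned above, is showing that $\tilde P\in SL(2,\mR)$ rather than just $GL(2,\mR)$, which is handled by the scalar renormalization and the a priori smallness of $s^\ast,u^\ast$ extracted from the same scalar condition.
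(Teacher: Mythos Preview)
The paper does not actually prove this theorem: it states it as a sufficient condition for uniform hyperbolicity taken from \cite{haro-llave:numerical,Figueras_Haro_2012,Haro_Book_2016}, and immediately moves on to how the hypotheses H1--H3 are checked numerically. So there is no ``paper's own proof'' to compare against.

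That said, your sketch is the right kind of argument and matches the framework in those references: change coordinates by the approximate reducibility $P_1$, interpret the conjugated cocycle as a diagonal hyperbolic matrix plus a small perturbation controlled by $\sigma$ and $\tau$, and run a graph transform (equivalently, a Banach fixed point) to find the true invariant directions as graphs over the approximate ones. The main technical content you correctly identify---propagating the $\sigma,\tau$ errors through the nonlinear fixed-point map so that the single scalar condition $\sigma+\lambda+\tau<1$ simultaneously yields an invariant ball and a contraction---is exactly the bookkeeping carried out in \cite{Haro_Book_2016}. One small point: your claim that $\det(I+F)=1$ because $P_1,P_2\in SL(2,\mR)$ is not quite right as stated, since $P_1P_2\in SL(2,\mR)$ only gives $\det(I+F)=1$, not a direct bound on $\|P_1^{-1}-P_2\|$; you would instead bound $(I+F)^{-1}-I$ via a Neumann series using $\|F\|<\tau<1$, which is the step that actually feeds $\tau$ into the contraction estimate.
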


If the invariant stable and unstable bundles have non-orientable topology, we need to apply the same theorem but on the cocycle $\left(\dfrac{\omega}{2}, A_a(2\cdot)\right)$on $\mathbb{R}/(2\mathbb{Z})\times\mathbb R^2$ instead of $\mathbb{R}/\mathbb{Z}\times\mathbb R^2$.

\subsection{Dynamical Method: Validation of Uniform Hyperbolicity}

The idea for proving the existence of open gaps for Almost Mathieu operators will be to compute numerically, i.e. with float point arithmetic, sufficiently good approximations of a reducing transformation $P$ and a constant diagonal matrix $\Lambda$ so that they can be validated, using interval arithmetic, by checking hypothesis (H1-H3) in Theorem \ref{res:persistence} above. 

\subsubsection{Computation of the approximations}

The main target for this step is to get a representation of the matrix-valued map $P(\te)$ which gives the change of coordinates and the diagonal matrix $\Lambda$. Since the base dynamics are on the circle, it is natural to express them as Fourier series. There are several ways of computing such approximations for this problem, and a discussion of which one is the optimal is out of scope of this paper \cite{Figueras_Haro_Luque_2017}. In order to compute a first approximation of a fiber of the unstable (and stable) bundle of the cocycle we use a simple iteration method. Once this is computed, by simply iterating this fiber under the dynamics, we propagate it to all node points in $\mathbb T$. Finally, we use FFT to obtain the desired Fourier series.

\subsubsection{Interval arithmetic and validation of solutions}

The validation of the solutions follows the same principles as the ones developed in \cite{Haro_Book_2016} but with some improvements. The main one is that, instead of using Fourier Models for rigorously manipulating the products and operations needed in the proof, we use instead the more advanced technique of rigorous FFT developed in the paper \cite{Figueras_Haro_Luque_2017}. With this improvement we transform the computation complexity from order $N^2$ to $N\log(N)$, being $N$ the number of Fourier modes.

Another improvement is that, once we have validated the hyperbolicity of the cocycle for a spectral value $a$, we can automatically obtain an interval of $a$'s where  the cocycle is hyperbolic with the same rotation number. This is done by using a similar idea from \cite{de_la_Llave_Figueras_2017}. Assume that the inequality in hypothesis  H1 holds for a spectral value $a_0$ with upper bound $\sigma_0$:
\begin{equation}\label{eq: ineq}
\max_{ij}\left| \left(P_2(\theta+\omega) A_{a_0}(\theta) P_1(\theta) -\Lambda\right)_{ij} \right| <\sigma_0
\end{equation}
for any $\theta \in \mT$. Then, for any $\varepsilon$ we obtain that  
\begin{multline*}
P_2(\theta+\omega) A_{a_0+\varepsilon}(\theta) P_1(\theta) -\Lambda=\\
P_2(\theta+\omega) A_{a_0}(\theta) P_1(\theta) -\Lambda+
\varepsilon P_2(\theta+\omega) 
\begin{pmatrix}-1 & 0\\ 0 & 0\end{pmatrix}
P_1(\theta).
\end{multline*}

This means that the left-hand side of inequality \eqref{eq: ineq} satisfies the bound
\begin{multline*}
\max_{ij}\left| \left(P_2(\theta+\omega) A_{a_0}(\theta) P_1(\theta) -\Lambda\right)_{ij} \right|< \\
\sigma_0+\varepsilon 
\underbrace{\left(\max_{\theta}\max_{ij}\left|(P_2(\theta)_{ij}\right|\right)}_{\|P_2\|}
\underbrace{\left(\max_{\theta}\max_{ij}\left|(P_2(\theta)_{ij}\right|\right)}_{\|P_1\|}
\end{multline*}
and  any $\varepsilon$ satisfying $$|\varepsilon|< \frac{1-\sigma_0-\lambda}{\|P_2\|\|P_1\|}$$ will still fulfill the hypotheses of Theorem \ref{res:persistence}, and the cocycle will still be hyperbolic.
 
\subsection{Spectral Method: Periodic approximants}

Besides using the \emph{dynamical method}, we also compute some spectral gaps for Almost Mathieu operators using periodic approximations, which is what we call the \emph{spectral method}. This is essentially a computer-assisted validation of the original periodic approximation which Hofstadter  \cite{Hofstadter1976} used to produce his famous butterfly-shaped plots. 

\subsubsection{Persistence of gaps. Theoretical results}

This approach uses the continuity of the spectrum as a function of $\omega$ which, in a quantitative form, was established in \cite{Choi1990} and later improved in \cite{Avron1990a}. Using continuity results on the frequency $\omega$ one can prove results for \emph{irrational} values of $\omega$ if the spectrum of a sufficiently close periodic frequency is well-approximated. Indeed, let $\omega$ and $\omega'$ be two frequencies and $\Sigma_{\omega}$ and $\Sigma_{\omega'}$ be the two spectra of critical AMO. Recall that when $\omega$ is rational, $\Sigma_\omega$ is defined through the union (\ref{eq:periodicspectrum}). 

The continuity result by Choi, Elliot and Yui \cite{Choi1990} (Theorem 4.3) states that  
\begin{equation}\label{eq:CEYestimate}
\operatorname{Dist}(\Sigma_{\omega},\Sigma_{\omega'})
 \le 3 \left(2\pi |\omega'-\omega|\right)^{\frac{1}{3}},
\end{equation}
where $\operatorname{Dist}(K_1, K_2)$, for any two compact subsets $K_1$ and $K_2$, refers to the Hausdorff distance between $K_1$ and $K_2$. The above formula on the distance between the two spectra as sets can be written in terms of gaps. Indeed, once an open gap exists for a certain value frequency $\omega$, 

\begin{proposition}[cf \cite{Choi1990}]\label{res:estimatesonpersistenceCEY} Let $k\in \mZ$ and let $(a_k^-(\omega),a_k^+(\omega))$ be an open spectral gap with label $k$ in $\Sigma_{\omega}$ and $\gamma_k(\omega)=a_k^+(\omega)-a_k^-(\omega)$ its length.  If $\omega' \in [0,1)$ satisfies that 
\[
\gamma_k(\omega)> 6 \left(2\pi |\omega'-\omega|\right)^{\frac{1}{3}}
\]
then the gap with label $k$ is open for $\omega'$ and it includes the interval
\[
\left( 
a_k^-(\omega)+3 \left(2\pi |\omega'-\omega|\right)^{\frac{1}{3}}
,
a_k^+(\omega)-3 \left(2\pi |\omega'-\omega|\right)^{\frac{1}{3}}
\right)
\subset
(a_k^-(\omega'),a_k^+(\omega')).
\]
and, therefore, its length satisfies \begin{equation}\label{eq:estimateongaplengthCEY}
\gamma_k(\omega')>\gamma_k(\omega)-6 \left(2\pi |\omega'-\omega|\right)^{\frac{1}{3}}.
\end{equation}
\end{proposition}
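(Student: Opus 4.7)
The plan is to use the CEY continuity estimate \eqref{eq:CEYestimate} to trap a shrunken version of the $\omega$-gap inside the resolvent of $\Sigma_{\omega'}$, and then to invoke the rotation number to match its label to $k$. Concretely, I will set $\delta := 3(2\pi|\omega'-\omega|)^{1/3}$, so that the hypothesis reads $\gamma_k(\omega) > 2\delta$ and \eqref{eq:CEYestimate} becomes $\operatorname{Dist}(\Sigma_\omega, \Sigma_{\omega'}) \le \delta$. The central object throughout will be the closed interval
\[
J := [a_k^-(\omega)+\delta,\; a_k^+(\omega)-\delta],
\]
which is nondegenerate by hypothesis.

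The first step is to show $J \cap \Sigma_{\omega'} = \emptyset$. For any $a \in J \cap \Sigma_{\omega'}$, the Hausdorff bound would furnish some $a'' \in \Sigma_\omega$ with $|a-a''|\le \delta$, placing $a''$ strictly inside $(a_k^-(\omega), a_k^+(\omega))$ and contradicting that this is an open gap of $\Sigma_\omega$. Consequently $J$ must lie in a single connected component of $\mathbb{R}\setminus \Sigma_{\omega'}$, i.e.\ inside a single open gap of $\Sigma_{\omega'}$.

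The second step is to identify that gap as the one with label $k$. By the Gap Labelling Theorem, $\rot(\cdot, \omega')$ is constant on this gap with value $\{k'\omega'\}$ for a unique integer $k'$, whereas $\rot(\cdot, \omega) \equiv \{k\omega\}$ on $(a_k^-(\omega), a_k^+(\omega))$. Picking any $a_\ast$ in the interior of $J$ and invoking joint continuity of $\rot(a,\omega)$ in $(a,\omega)$, together with the continuity of $\omega \mapsto \{k\omega\}$, I will argue that for $|\omega'-\omega|$ small enough the two candidate resonant values $\{k'\omega'\}$ and $\{k\omega'\}$ must coincide, whence $k'=k$ by uniqueness of the label. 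Combined with the previous step this gives $J \subset (a_k^-(\omega'), a_k^+(\omega'))$; the endpoint inclusion then follows by reading off the endpoints of $J$, and \eqref{eq:estimateongaplengthCEY} is obtained by subtraction.

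I expect the label identification to be the main obstacle. The Hausdorff inequality controls the two spectra as sets, but does not on its own transport gap labels across the frequency perturbation; the rotation number (or equivalently the integrated density of states) has to be used as an intermediary, with enough quantitative control on its joint continuity to exclude that $\{k'\omega'\}$ is a different resonant value of $\omega'$ lying near $\{k\omega'\}$. The first step, by contrast, is a direct and essentially combinatorial consequence of the Hausdorff inequality.
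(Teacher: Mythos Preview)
Your first step is essentially the paper's argument: the Hausdorff bound $\operatorname{Dist}(\Sigma_\omega,\Sigma_{\omega'})\le\delta$ immediately forces the open interval $(a_k^-(\omega)+\delta,\,a_k^+(\omega)-\delta)$ to miss $\Sigma_{\omega'}$. (One small technical point: work with the \emph{open} interval rather than the closed $J$. At an endpoint of $J$ one only gets $|a-a''|\le\delta$ with possible equality, and then $a''$ could be the gap endpoint $a_k^\pm(\omega)\in\Sigma_\omega$, so no contradiction arises.)

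The label identification, however, has a genuine gap. You argue that ``for $|\omega'-\omega|$ small enough'' the resonant values $\{k'\omega'\}$ and $\{k\omega'\}$ must coincide, but the proposition is stated for \emph{any} $\omega'$ satisfying the quantitative hypothesis $\gamma_k(\omega)>2\delta$, not merely for asymptotically small perturbations. Joint continuity of the rotation number alone gives you no control over how close the other resonant values $\{m\omega'\}$, $m\ne k$, may lie to $\{k\omega'\}$; these can be arbitrarily dense. So a bare smallness argument cannot force $k'=k$, and you yourself flag exactly this difficulty in your last paragraph without resolving it.

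The fix---and this is precisely what the paper does---is a connectedness argument. The same Hausdorff estimate applies to every $\omega''$ on the segment between $\omega$ and $\omega'$ (since $|\omega''-\omega|\le|\omega'-\omega|$), so your chosen point $a_\ast$ lies in a gap of $\Sigma_{\omega''}$ for \emph{all} such $\omega''$. The map $\omega''\mapsto(\text{label of the gap of }\Sigma_{\omega''}\text{ containing }a_\ast)$ is therefore a well-defined integer-valued function on a connected interval; by continuity of the label (equivalently, of the rotation number) it is locally constant, hence globally constant, and it equals $k$ at $\omega''=\omega$. This delivers $k'=k$ without any need for quantitative spacing estimates on resonant values.
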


\begin{proof}
Using inequality \ref{eq:CEYestimate} one has that, for any $a\in \mR$,
\[
d(a,\Sigma_\omega)\le d(a,\Sigma_{\omega'})+d(\Sigma_\omega,\Sigma_{\omega'})
\le d(a,\Sigma_{\omega'})+3 \left(2\pi |\omega'-\omega|\right)^{\frac{1}{3}}.
\]
Under the present hypothesis, if 
\[
a \in
\left( 
a_k^-(\omega)+3 \left(2\pi |\omega'-\omega|\right)^{\frac{1}{3}}
,
a_k^+(\omega)-3 \left(2\pi |\omega'-\omega|\right)^{\frac{1}{3}}
\right)
\] 
then $d(a,\Sigma_\omega)> 3 \left(2\pi |\omega'-\omega|\right)^{\frac{1}{3}}$ and also $d(a,\Sigma_{\omega'})>0$ which, given that $\Sigma_{\omega'}$ is compact, implies that $a$ lies inside a gap of $\Sigma_{\omega'}$. The latter holds for \emph{every} $\omega''$ between $\omega$ and $\omega'$ so, by continuity of the label \cite{Choi1990}, if $a$ belongs to the gap with label $k$ for $\omega$, then it must also belong to the gap labelled by the same $k$ for $\omega'$.  
\end{proof}

In particular, if a certain computer method proves rigorously the existence of a certain gap of length $l$ for a particular value of $\omega$, this gap will persist for nearby values of $\omega'$ through an explicit formula which involves $l$ and the difference $|\omega'-\omega|$:
\begin{equation}
|\omega'-\omega| < \frac{l^3}{6^3 2\pi} = \frac{l^3}{432\pi} < \frac{l^3}{1357}
\end{equation}

This bound was  improved by Avron, van Mouche and Simon \cite{Avron1990a} (Theorem 7.3) to an exponent of $1/2$. As Krasovsky notes in \cite{Krasovsky2016} (Section 4), if $\alpha$ belongs to $\Sigma_{\omega}$, there exists $\alpha'\in \Sigma_{\omega'}$ such that
\begin{equation}\label{eq:HolderK}
\left|\alpha-\alpha'\right| \le 60 \left|\omega-\omega'\right|^{1/2}.
\end{equation}
and the constant $60$ can be replaced by a better bound which depends on the difference $\omega-\omega'$. Similarly to Proposition \ref{res:estimatesonpersistenceCEY} we can now summarize the implications that these bounds have on the estimates of gap size:

\begin{proposition}[cf \cite{Avron1990a}]\label{res:estimatesonpersistenceASM} Let $k\in \mZ$ and let $(a_k^-(\omega),a_k^+(\omega))$ be an open spectral gap with label $k$ in $\Sigma_{\omega}$ and $\gamma_k(\omega)=a_k^+(\omega)-a_k^-(\omega)$ its length.  If $\omega' \in [0,1)$ satisfies that 
\[
\gamma_k(\omega)> 120 |\omega-\omega'|^{1/2}
\]
then the gap with label $k$ is open for $\omega'$ and it includes the interval
\[
\left( 
a_k^-(\omega)+60 |\omega-\omega'|^{1/2}
,
a_k^+(\omega)-60 |\omega-\omega'|^{1/2}
\right)
\subset
(a_k^-(\omega'),a_k^+(\omega'))
\]
and therefore its length satisfies
\begin{equation}\label{eq:estimateongaplength}
\gamma_k(\omega')>\gamma_k(\omega)-120|\omega-\omega'|^{\frac{1}{2}}.
\end{equation}
\end{proposition}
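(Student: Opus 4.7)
The plan is to replicate the argument of Proposition \ref{res:estimatesonpersistenceCEY} essentially verbatim, simply swapping the Choi--Elliot--Yui Hölder-$1/3$ bound \eqref{eq:CEYestimate} for the Avron--van Mouche--Simon Hölder-$1/2$ bound \eqref{eq:HolderK}. The only non-cosmetic point is to verify that \eqref{eq:HolderK}, stated in \cite{Krasovsky2016} as a one-sided approximation ($\alpha\in\Sigma_\omega\Rightarrow \exists\alpha'\in\Sigma_{\omega'}$), upgrades to a bound on the Hausdorff distance, since \eqref{eq:HolderK} is symmetric in $\omega$ and $\omega'$: by exchanging the roles of $\omega$ and $\omega'$ we also control distances in the opposite direction, so
\[
\operatorname{Dist}(\Sigma_\omega,\Sigma_{\omega'}) \le 60\,|\omega-\omega'|^{1/2}.
\]

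With this estimate in hand, the rest of the proof is the same triangle-inequality argument as before. For any $a\in\mR$ one has $d(a,\Sigma_\omega)\le d(a,\Sigma_{\omega'})+60|\omega-\omega'|^{1/2}$. If $a$ lies in the shrunken interval
\[
\bigl(a_k^-(\omega)+60|\omega-\omega'|^{1/2},\;a_k^+(\omega)-60|\omega-\omega'|^{1/2}\bigr),
\]
then $d(a,\Sigma_\omega)>60|\omega-\omega'|^{1/2}$, hence $d(a,\Sigma_{\omega'})>0$, and compactness of $\Sigma_{\omega'}$ shows $a$ belongs to a gap of $\Sigma_{\omega'}$. The hypothesis $\gamma_k(\omega)>120|\omega-\omega'|^{1/2}$ is exactly what is needed to ensure this shrunken interval is nonempty, so some such $a$ actually exists and the gap is genuinely open.

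To identify that this gap carries label $k$, I would apply the same observation as in the proof of Proposition \ref{res:estimatesonpersistenceCEY}: the bound \eqref{eq:HolderK} holds uniformly for \emph{every} intermediate frequency $\omega''$ on the segment between $\omega$ and $\omega'$ (since $|\omega-\omega''|\le|\omega-\omega'|$), so the chosen $a$ lies in an open gap of $\Sigma_{\omega''}$ for all such $\omega''$. Continuity of the gap label along this one-parameter family \cite{Choi1990} then forces the label to remain equal to $k$. Finally, the length bound \eqref{eq:estimateongaplength} is immediate from the inclusion of intervals, giving $\gamma_k(\omega')\ge \gamma_k(\omega)-120|\omega-\omega'|^{1/2}$.

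The only step that could cause trouble is the passage from the one-sided statement of \eqref{eq:HolderK} to a true Hausdorff bound; however, since Avron--van Mouche--Simon's estimate is derived from a symmetric commutator inequality, this is automatic. Everything else is a clean transcription of the previous proof with the constants $3$ and $1/3$ replaced by $60$ and $1/2$.
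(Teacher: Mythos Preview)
Your proposal is correct and matches the paper's approach exactly: the paper does not spell out a separate proof for this proposition but explicitly indicates that it follows ``similarly to Proposition \ref{res:estimatesonpersistenceCEY}'' with the Hölder-$1/2$ estimate \eqref{eq:HolderK} in place of \eqref{eq:CEYestimate}. Your extra remark that the one-sided formulation of \eqref{eq:HolderK} yields a genuine Hausdorff bound by the symmetry in $\omega,\omega'$ is a useful clarification the paper leaves implicit.
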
 

Although for small values of gap length $l$ the estimates of Proposition \ref{res:estimatesonpersistenceASM} give wider intervals, for intermediate values of $l$ Proposition \ref{res:estimatesonpersistenceCEY} may give wider approximations, so it is convenient to compute both of them. 

\subsubsection{Almost Mathieu operators with rational frequencies}

The spectrum of Almost Mathieu operators $H_{\omega,\theta}$ with a rational frequency $\omega=\frac{p}{q}$,  $p$ and $q$ coprime, is the union of a finite number of spectral bands separated by spectral gaps, all depending on $\theta$. Indeed, the spectrum of $H_{b,p/q,\theta}$ is given by the endpoints of the possible spectral gaps and bands:
\[
E_0 > E_1 \ge E_2 > E_3 \ge E_4 > \dots E_{2j-1}\ge E_{2j}>E_{2j+1} \ge \dots >E_{2q-1}
\]
so that the spectrum is given by the union of the bands
\[
\Sigma_{p/q}(\theta)= 
[E_{2q-1}, E_{2q-2}] \cup \dots \cup [E_1,E_0]=
\bigcup_{j=0}^{q-1} [E_{2j+1},E_{2j}].
\]

Spectral bands are separated by $q-1$ (possibly void) spectral gaps
\[
\Delta_k(\theta)= (E_{2q-2k},E_{2q-2k-1}), \quad k=1,\dots,q-1.
\]
Although spectral gaps will usually depend on $\theta$ for these rational frequencies, the integrated density of states is always $k/q$ on the $k$-th gap \cite{bellissard-simon}. 

Each of the endpoints of spectral bands is associated to the periodic or antiperiodic problem for a $q$-dimensional matrix. Indeed, for $k\equiv 0,3$ (mod 4), the endpoints $E_k$ correspond to the eigenvalues of the matrix 
\begin{equation}\label{eq:periodicmatrix}
H^+_{p/q}(\theta)=
\begin{pmatrix}
2\cos\left({2\pi}\theta\right) &  &  & 1 \\ 
 & 2\cos{2\pi}\left(\theta+\frac{p}{q}\right) &  &  \\ 
 &  & \ddots &  \\ 
1 &  &  & 2\cos{2\pi}\left(\theta+(q-1)\frac{p}{q}\right)
\end{pmatrix}
\end{equation}
and, for $k\equiv 1,2$ (mod 4), these correspond to the eigenvalues of the matrix $H^-_{p/q}(\theta)$ with anti-periodic boundary condition:
\begin{equation}\label{eq:antiperiodicmatrix}
H^-_{p/q}(\theta)=
\begin{pmatrix}
2\cos\left({2\pi}\theta\right) &  &  & -1 \\ 
 & 2\cos{2\pi}\left(\theta+\frac{p}{q}\right) &  &  \\ 
 &  & \ddots &  \\ 
-1 &  &  & 2\cos{2\pi}\left(\theta+(q-1)\frac{p}{q}\right)
\end{pmatrix}. 
\end{equation} 
Equivalently, these eigenvalues can also be found as the values of $a$ where the trace of the $q$-th iterate of the transfer matrix
\begin{equation}\label{eq:tracetransfer}
\begin{pmatrix}
	a - 2\cos{2\pi}\left(\theta+(q-1)\frac{p}{q}\right) & -1 \\
	1 & 0
\end{pmatrix} 
\dots
\begin{pmatrix}
	a - 2\cos{2\pi}\theta & -1 \\
	1 & 0
\end{pmatrix} 
\end{equation}
is either $+2$ for periodic eigenvalues or $-2$ for anti-periodic eigenvalues.

Almost Mathieu operators with rational frequencies are very unique because the union of the spectra over all phases $\theta$,  which defines the spectrum $\Sigma_\omega$ in Equation (\ref{eq:periodicspectrum}), can be restricted to \emph{two} particular values of $\theta$ as a consequence of Chambers Formula \cite{bellissard-simon}.  This is the contents of the following theorem by Bellissard and Simon \cite{bellissard-simon} which states which are the phases $\theta$ that give the intersection of all gaps and thus the gaps of the spectrum $\Sigma_\omega$ as the union of all spectra $\Sigma_\omega(\theta)$ for a periodic frequency $\omega=p/q$.

\begin{theorem}[cf \cite{bellissard-simon}]\label{res:periodicspectrum}
Consider an Almost Mathieu operator with coupling $|b|=2$, rational frequency $\omega=p/q$ and some $\theta \in \mathbb{T}$. Let $\Delta_k(\theta)$ be the spectral gaps of the spectrum $\Sigma_\omega(\theta)$ for $k=0,\dots,q-1$. The gaps satisfy
\begin{eqnarray*}
\Delta_k(0)  = \bigcap_{\theta}\Delta_k(\theta), \quad k=1,3,\ldots \\	
\Delta_k\left(\frac{1}{2q}\right)  =  \bigcap_{\theta}\Delta_k(\theta), \quad k=2,4,\ldots 
\end{eqnarray*}
\end{theorem}

This result reduces the computation of the spectrum of AMO with rational frequencies to the computation of the eigenvalues of two self-adjoint matrices of dimension $q$,  namely (\ref{eq:periodicmatrix}) and (\ref{eq:antiperiodicmatrix}). Note that this is precisely the justification for the algorithms for plotting  Hofstadter butterfly in Figure \ref{fig:hofstadter} (see also \cite{Guillement1989,Osadchy2001} for some refinements to reduce the dimension and the recent software \cite{HofstadterTools}). Note that this result is specific of the Almost Mathieu operator and does not necessarily hold if we replace the cosine by another periodic function. Using the continuity of the spectrum given by either Proposition \ref{res:estimatesonpersistenceCEY} or \ref{res:estimatesonpersistenceASM}, each of the gaps obtained for the periodic problem persists for an open interval of frequencies depending on gap length. Of course, this procedure can only yield opening of all gaps (the {\emph{Dry Ten Martini Problem}} \cite{avilaDryTenMartini2023}, a question which is out of the scope of this problem) when lower estimates on gap length are available for any $q$.

\subsubsection{Computer-assisted proof of the persistence of gaps for irrational frequencies}

With the previous background in mind, we can now describe the algorithm to prove the existence of gaps for an irrational frequency $\omega$ from the periodic problem. 

We take a certain rational approximation $p/q$ of $\omega$, with $p$ and $q$ coprime, and compute a validated approximation of the eigenvalues of the $q$-dimensional matrices $H^+_{p/q}(\theta)$ and $H^-_{p/q}(\theta)$ for $\theta=0$ and $\theta=1/2q$. In our case we used the Arblib library \cite{Johansson2017arb}, a C library for interval arithmetic, which provides the verified computation of eigenvalues for a matrix in interval arithmetic, see \cite{rump2010verification, vanderhoeven:hal-01579079}. We chose to compute only rational frequencies $\omega=p/q$ with $q$ being an odd number to avoid the unnecessary complication of having $0$ as a double eigenvalue (see Figure \ref{fig:hofstadter}).

Using Theorem \ref{res:periodicspectrum}, we construct the spectral bands and gaps of the periodic problem $\Sigma_{p/q}$, representing each endpoint of bands by an interval with rigorously contains the true endpoint. Additionally, this gives rigorous upper and lower bounds on the measure of the spectrum for this rational value and of every gap length. 

We can use the bounds given by Proposition \ref{res:estimatesonpersistenceCEY} or \ref{res:estimatesonpersistenceASM} to give, for every gap in the periodic problem with rational frequency $p/q$ that we have taken, a rigorous bound for which this gap will exist for any $\omega$ close enough to $p/q$ as well as a rigorous lower bound on its size. Indeed, taking an irrational $\omega$ (like $(\sqrt{5}-1)/2$ or $e-2$) and some rational approximation $p/q$, we can prove the existence of all the gaps for irrational $\omega$ which are larger than a certain bound (given by the above two continuity results on the spectrum) and depends only on the distance between $\omega$ and this rational approximation $p/q$.

\section{Results}

\subsection{Dynamical Method}

\subsubsection{Results for $\omega_1=(\sqrt{5}-1)/2$}

Using the dynamical method we obtained the gaps displayed in Table \ref{tab:golden_qp}. A total of 8 gaps were obtained, a part from the two unbounded spectral gaps at the bottom and the top of the spectrum. 
The distance between the upper and the lower point of the spectrum minus the total sum of validated gaps for $\omega_1$  was $0.635$ which gives an upper bound of the measure of the spectrum (which is known to be zero) and represents the $86.63\%$ of the interval between the validated right endpoint of the lowest gap and the right endpoint of the upper gap. The computation took 4h00m at a i7 single processor @1.8GHz.

\begin{table}
\begin{tabular}{llr}
\toprule
   Left Endpoint &    Right Endpoint &  Label \\
\midrule
       $-\infty$ &    -2.59851518536 &        0 \\
  -2.50857481536 &    -2.37612614288 &     -3 \\
  -2.30758848800 &    -2.00845718154 &     2 \\
  -1.789961146545 &   -0.273318241882 &   -1 \\
 -0.1298164588928 &  -0.0196121757507 &    4 \\
 0.0196121757507 &   0.1298164588928 &      -4 \\
  0.273318241882 &    1.789961146545 &      1 \\
   2.00845718154 &     2.30758848800 &     -2 \\
   2.37612614288 &     2.50857481536 &     3 \\
   2.59851518536 &         $+\infty$ &     0 \\
\bottomrule
\end{tabular}
\caption{Gaps for $\omega_1=(\sqrt{5}-1)/2$ obtained from the dynamical  method using double precision interval arithmetics and up to 1024 Fourier modes. The values printed in the left and right endpoints correspond to rounding values
guaranteeing they are inside the validated spectral gaps. The last column of the table is the label of the original gap in the periodic problem.}\label{tab:golden_qp}
\end{table}

\subsubsection{Results for $\omega_2=e-2$}

With the same dynamical method we obtained the gaps displayed in Table \ref{tab:golden_e-2}. A total of 8 gaps were obtained. 
The distance between the upper and the lower point of the spectrum minus the total sum of validated gaps for $\omega_2$  was $0.8273$ which gives an upper bound of the measure of the spectrum (which is known to be zero) and represents the $84.3\%$ of of the interval between the validated right endpoint of the lowest gap and the right endpoint of the upper gap. The computation took 4h00m at a i7 single processor @1.8GHz.

\begin{table}
\begin{tabular}{llr}
\toprule
    Left Endpoint &    Right Endpoint & Label \\
\midrule
        $-\infty$ &    -2.72603125000 &     0 \\
   -2.65714062500 &    -2.63451562500 &     -4 \\
   -2.60421010408 &    -2.61414062500 &    3 \\
   -2.48323437500 &   -0.801437500000 &    -1 \\
  -0.589406250000 &   -0.096787500000 &       2 \\
   0.096787500000 &    0.589406250000 &      -2 \\
   0.801437500000 &     2.48323437500 &       1 \\
    2.55656250000 &     2.61414062500 &      -3 \\
    2.63451562500 &     2.65714062500 &      4 \\
    2.72603125000 &         $+\infty$ &       0 \\
\bottomrule
\end{tabular}
\caption{Gaps for $\omega_1=e-2$ obtained from the dynamical method using double precision interval arithmetics and up to 1024 Fourier modes. The values printed in the left and right endpoints correspond to rounding values
guaranteeing they are inside the validated spectral gaps. The last column of the table is the label of the original gap in the periodic problem.}\label{tab:golden_e-2}
\end{table}

\subsection{Spectral Method}

\subsubsection{Results for $\omega_1=(\sqrt{5}-1)/2$}

\def\goldenapproximant{987/1597}
\def\goldenHpHqHdps{50}
\def\enlargeHgolden{0.0251248715580 \pm 4.46 \cdot 10^{-14}}
\def\measureHspectrumHpHqHgolden{0.00584216754614 \pm 3.31 \cdot 10^{-15}}
\def\goldenmeanopenspectralgaps{8}
\def\measureHresolventHpHqHgolden{4.17228197900 \pm 3.49 \cdot 10^{-12}}
\def\percentHresolventHpHqHgolden{83.17 \pm 6.42 \cdot 10^{-4}}
\def\boundHmeasureHgolden{0.844250239140 \pm 2.13 \cdot 10^{-13}}
\def\goldenCEYestimate{1.01090201763 \pm 3.96 \cdot 10^{-12}}
\def\goldenNUestimate{2.59144268092 \pm 3.92 \cdot 10^{-12}}
\def\goldenCEYconjecture{1.89689263385 \pm 2.93 \cdot 10^{-12}}
\def\goldenSmallestGap{3.01735109613\cdot 10^{-8}}
\def\eminustwoapproximant{719/1001}
\def\eminustwoHpHqHdps{40}
\def\enlargeHe{0.0199157821095 \pm 7.79 \cdot 10^{-16}}
\def\measureHspectrumHpHqHe{0.00932062542567 \pm 1.04 \cdot 10^{-15}}
\def\boundHmeasureHe{4.39897520877 \pm 3.90 \cdot 10^{-12}}
\def\eminustwoopenspectralgaps{12}
\def\eminustwoSmallestGap{1.51034220345\cdot 10^{-13}}
\def\eminustwoCEYestimate{1.02993096753 \pm 8.70 \cdot 10^{-13}}
\def\eminustwoNUestimate{4.75030275304 \pm 4.26 \cdot 10^{-12}}
\def\eminustwoCEYconjecture{2.34494604133 \pm 4.46 \cdot 10^{-12}}

\def\QmaxThouless{95}
\def\ThoulessHdps{500}
\def\ThoulessTotal{1850}
\def\CEYeightpq{3.05124276000 \pm 1.56 \cdot 10^{-12}}
\def\SmallestGappq{9.43425512814\cdot 10^{-47}}
\def\GapLengthConjecture{true}
\def\ThoulessScalingConjecture{true}
\def\LowerBoundHolderConstant{7.03127155516 \pm 3.80 \cdot 10^{-12}}
\def\goldengaps{8}
\def\eminustwogaps{12}
\def\goldenmeanopenspectralgaps{8}
\def\eminustwoopenspectralgaps{12}

We proved the existence of  gaps for $\omega_1=(\sqrt{5}-1)/2$  which are given in Table \ref{tab:golden_periodic}. We used $p/q= \goldenapproximant$ and ball arithmetic with $\goldenHpHqHdps$ digits and showed the existence of $\goldenmeanopenspectralgaps$ spectral gaps (apart from bounds of the lowest and upper spectral gap). In this case, we used the estimates from Proposition \ref{res:estimatesonpersistenceASM} to obtain that gaps boundaries had to be enlarged by  $\enlargeHgolden$ . The measure of the spectrum for the rational approximation $\Sigma(p/q)$ was   $\measureHspectrumHpHqHgolden$. The distance between the upper and the lower point of the spectrum minus the total sum of validated gaps for $\omega_1$  was $\boundHmeasureHgolden$  which gives an upper bound of the measure of the spectrum (which is known to be zero) and represents the $\percentHresolventHpHqHgolden \%$ of the interval between the validated right endpoint of the lowest gap and the right endpoint of the upper gap. The computation took 3h50m at a i7 single processor @1.8GHz.

\begin{table}[ht]
\begin{small}
\begin{tabular}{llrr}
\toprule
   Left Endpoint &    Right Endpoint &  Number of Gap &  Label \\
\midrule
       $-\infty$ &    -2.62263996148 &              0 &      0 \\
  -2.49080746353 &    -2.39389225666 &            233 &     -3 \\
  -2.29908225829 &    -2.01696396030 &            377 &      2 \\
  -1.84909334969 &   -0.214184658337 &            610 &     -1 \\
 -0.110814328134 &  -0.0386155348422 &            754 &      4 \\
 0.0386155348422 &    0.110814328134 &            843 &     -4 \\
  0.214184658337 &     1.84909334969 &            987 &      1 \\
   2.01696396030 &     2.29908225829 &           1220 &     -2 \\
   2.39389225666 &     2.49080746353 &           1364 &      3 \\
   2.62263996148 &         $+\infty$ &           1597 &      0 \\
\bottomrule
\end{tabular}

\caption{Validated Gaps for $\omega_1=(\sqrt{5}-1)/2$ obtained from the periodic approximation $p/q=\goldenapproximant$ using $\goldenHpHqHdps$ digits in ball arithmetic. The printing precision is set to a lower value for readability. The values printed in the left and right endpoints correspond to exact floating-point numbers which been rounded upward or downward to guarantee that they are inside the validated spectral gaps. The third column displays the ordering of the gap (from left to right) in the periodic problem: every validated gap comes from an gap in the periodic approximation $p/q$. The last column of the table is the label of the original gap in the periodic problem.}\label{tab:golden_periodic}
\end{small}
\end{table}

\subsubsection{Results for $\omega_2=e-2$}

Using the same method, we also computed gaps in the resolvent set using interval arithmetic with $\eminustwoHpHqHdps$ digits and $p/q = \eminustwoapproximant$, which is a rational approximation of $\omega_2=e-2$. Using again the estimates from Proposition \ref{res:estimatesonpersistenceASM} we obtained an enlargement value of $\enlargeHe$ and this proved the existence of the $\eminustwoopenspectralgaps$ gaps described in Table \ref{tab:e_periodic}. We note that, contrary to the golden mean case, it was enough to consider less digits to obtain more gaps. The computation took 1h40m at a i7 single processor @1.8GHz.

\begin{table}[ht]
\begin{small}
\begin{tabular}{llrr}
\toprule
    Left Endpoint &    Right Endpoint &  Number of Gap &  Label \\
\midrule
        $-\infty$ &    -2.74495255625 &              0 &      0 \\
   -2.65721095492 &    -2.64443590307 &            127 &     -4 \\
   -2.60421010408 &    -2.56648816182 &            155 &      3 \\
   -2.46431695653 &   -0.820368049989 &            282 &     -1 \\
  -0.646689943693 &   -0.611187586183 &            409 &     -5 \\
  -0.570487209785 &   -0.116609296737 &            437 &      2 \\
 -0.0740258280376 &  -0.0583643948664 &            465 &      9 \\
  0.0583643948664 &   0.0740258280376 &            536 &     -9 \\
   0.116609296737 &    0.570487209785 &            564 &     -2 \\
   0.611187586183 &    0.646689943693 &            592 &      5 \\
   0.820368049989 &     2.46431695653 &            719 &      1 \\
    2.56648816182 &     2.60421010408 &            846 &     -3 \\
    2.64443590307 &     2.65721095492 &            874 &      4 \\
    2.74495255625 &         $+\infty$ &           1001 &      0 \\
\bottomrule
\end{tabular}
\caption{Same as in Table \ref{tab:golden_periodic} but with $\omega_2=e-2$, $p/q=\eminustwoapproximant$ and using $\eminustwoHpHqHdps$ digits in ball arithmetic.}\label{tab:e_periodic}
\end{small}
\end{table}

\subsubsection{Remarks on the gaps of $\Sigma_{p/q}$}

The use of validated numerics with large precision allows a detailed analysis of the size of gaps for the periodic approximations of $\omega_1=\goldenapproximant$ and $\omega_2=\eminustwoapproximant$. 

Figure \ref{fig:goldengapdecaysemilogy} displays the general decay of gap length according to the label of each gap. Gaps tend to decay according to the label in general terms, but this decay is not strict, as it can be observed from the oscillating behaviour seen in the figure and can be checked for lower values of $q$.  For instance, for $p=13$ and $q=21$, the gap with label $-5$ has length $0.012767645482867 \pm 9.83\cdot 10^{-16}$ and the gap with label $-6$ has length $0.02806352869759 \pm 4.35\cdot 10^{-15}$. 

\begin{figure}
	\centering
	\includegraphics[width=0.99\linewidth]{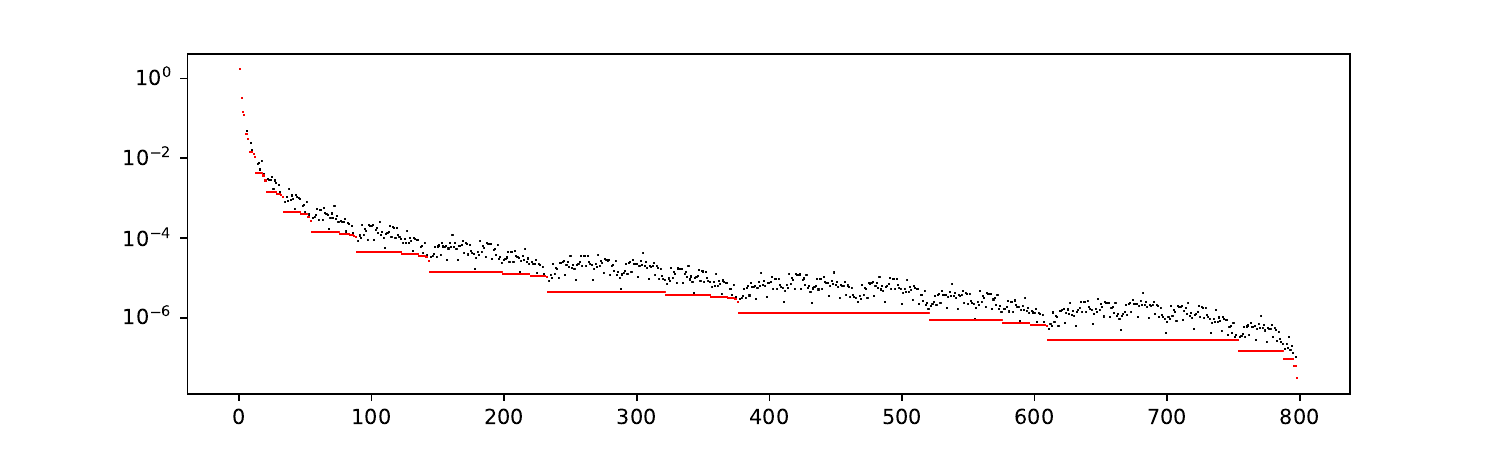} \\
	\includegraphics[width=0.99\linewidth]{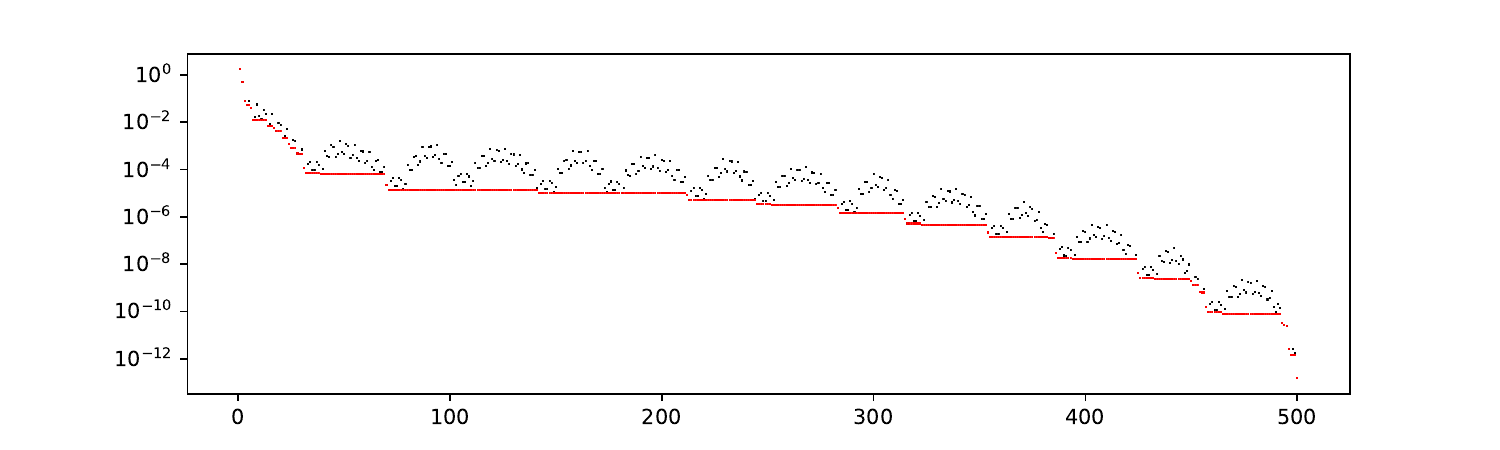}
	\caption{Length of gaps for $p/q=\goldenapproximant$ (top) and $p/q = \eminustwoapproximant$ (bottom) as a function of the absolute value of its label, $|k|$. In red, we plot the cummulative minima of gap length as a function of the absolute value of the label. Note the logarithmic scale on the vertical axis.}
	\label{fig:goldengapdecaysemilogy}
\end{figure}

The consecutive relative minima of gap length as a function of $|k|$ (the lower peaks of Figure \ref{fig:goldengapdecaysemilogy}) are important regarding the open question of whether all gaps are open for the $\omega_1$ and $\omega_2$. These minima appear at some particular values of $k$ which are related to the best rational approximations of $\omega_1$ and $\omega_2$. For $\omega_1=(\sqrt{5}-1)/2$ these consecutive minima of gap length occur precisely when the labels $k$ take values in the  Fibonacci sequence ($|k|=1,2,3,5,\dots$). Something similar happens for $\omega_2=e-2$, although in this case the rational best approximations are $1, 3, 4, 7, 32, 39, 71, 465, 536, \dots$.

We can observe that  gap lengths seem to decay faster for $\omega_2$ than for $\omega_1$. In fact, we checked that, in both cases, the following scaling law holds
\[
\gamma_{k} > \frac{1}{|k|^\nu}, \quad |k|>1,
\]
with $\nu_1=\goldenNUestimate$ for $p/q=\goldenapproximant$ and $\nu_2=\eminustwoNUestimate$ for $p/q= \eminustwoapproximant$.

Lower bounds on the gap length are important in the proofs of gap opening, but they seem to depend very much on the arithmetical properties of the frequency and the label of each gap. In \cite{Choi1990}  (Theorem 3.3) it was shown that the gaps for the periodic problem are greater than $8^{-q}$  while in \cite{Avila2009} (Theorem 7.2), the following was proved: for any irrational frequency and sequence of $p_n/q_n$ converging to it, there is a $\epsilon>0$ such that gaps of the periodic problem $p_n/q_n$ have size at least $e^{-\epsilon q_n}$.  For the golden mean $\omega_1$ and $p/q = \goldenapproximant$ we checked that all gaps are greater than $K_1^{-q}$ for $K_1=\goldenCEYestimate$, while for $\omega_2=e-2$ and $\eminustwoapproximant$ this value was $K_2=\eminustwoCEYestimate$.

\subsubsection{Exploration of different rational frequencies}

To see if similar estimates may hold for other frequencies, we computed gaps and bands for all rational frequencies for $p/q$ with odd $q$, $p$ and $q$ coprime and $q\le \QmaxThouless$. We used this to generate a database of $\ThoulessTotal$ values of $p/q$ and check different properties on gap decay. The total set of gaps which was computed is shown in Figure \ref{fig:validated-gaps}.

\begin{figure}
    \centering
    \includegraphics[width=1\linewidth]{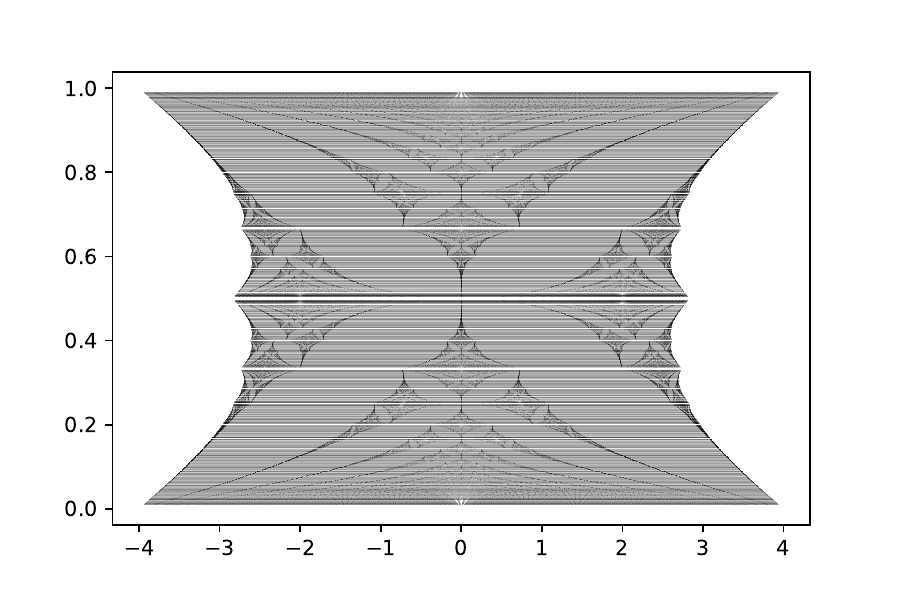}
    \caption{Validated gaps (horizontal direction)  in $\Sigma_{p/q}$ as a function of rational frequencies  $p/q$ (vertical direction) with odd $q$, $p$ and $q$ coprime and $q\le \QmaxThouless$.}
    \label{fig:validated-gaps}
\end{figure}

The gap with minimal length that was computed was $\SmallestGappq$ and this is why we used $\ThoulessHdps$ digits of internal precision, as opposed to the usual \texttt{float} precision. We checked that every gap was larger than $C^{-q}$ where $C=\CEYeightpq$, smaller than the bound $8$ obtained in \cite{Choi1990}. 

\begin{figure}
    \centering
    \includegraphics[width=0.99\linewidth]{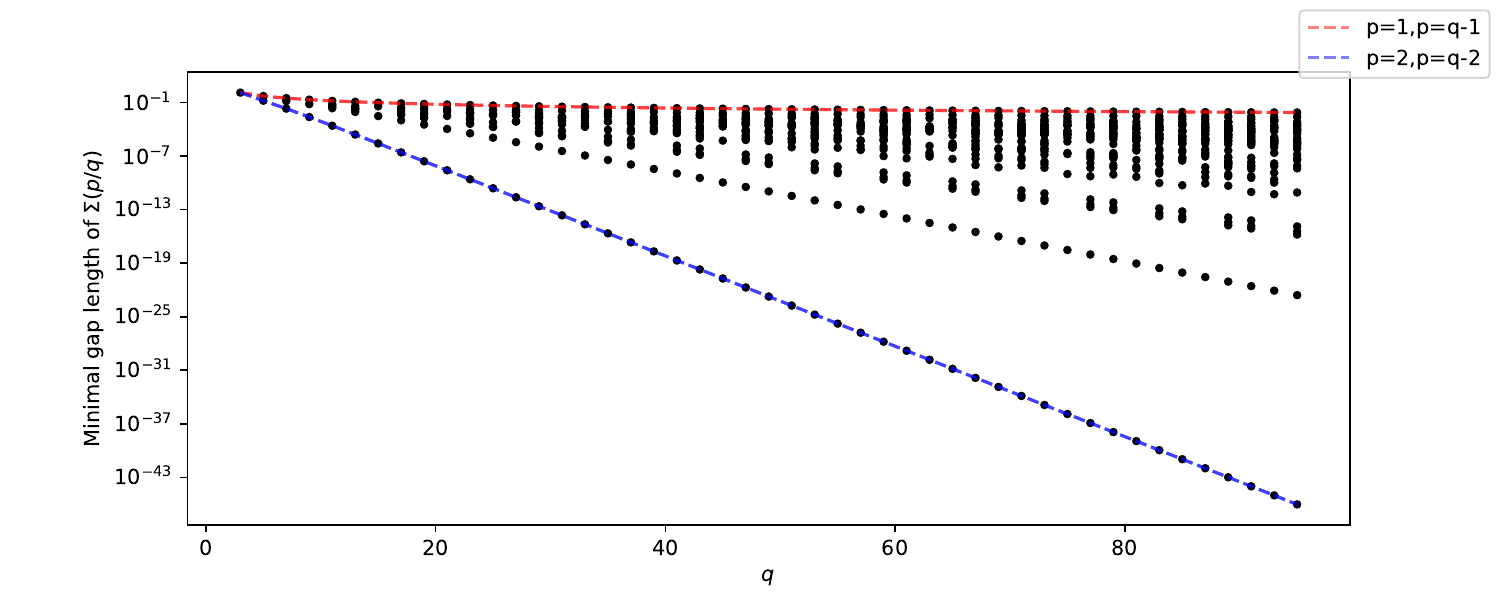}
    \includegraphics[width=0.99\linewidth]{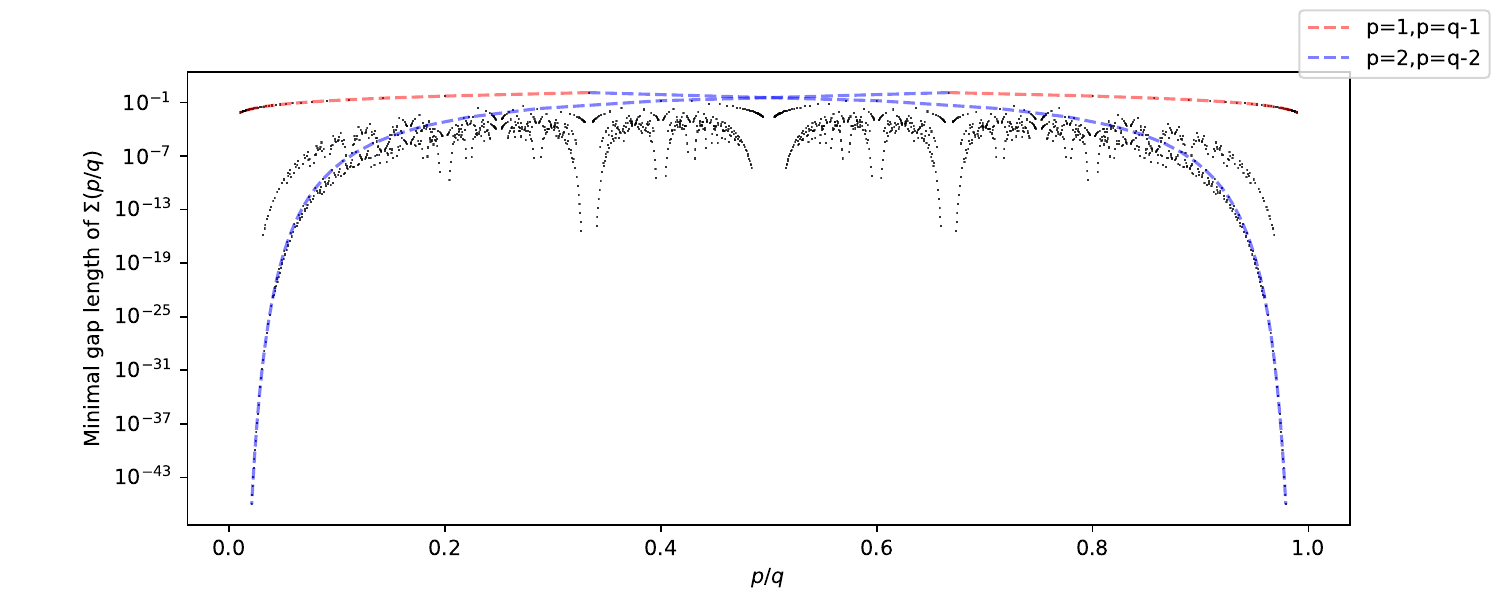}
    \caption{Top: Minimal Gap length for all rational frequencies for $p/q$ with odd $q$, $p$ and $q$ coprime and $q\le \QmaxThouless$. Note the logarithmic scale in the vertical axis. Bottom: the same plot as a function of $p/q$.}
    \label{fig:pq_gap_decay}
\end{figure}

Figure \ref{fig:pq_gap_decay} shows the decay in the minimal gap length for every $q$ in this range. We can observe that the values $p=1, q-1$ and $p=2, q-2$ are special and that give the upper and lower bounds on gap length for every odd $q$. The following statement has been checked to be \GapLengthConjecture \phantom{ }for $q \le \QmaxThouless$:

\begin{conjecture}
There is a constant $8\ge R> \CEYeightpq$ such that for any $p$ and $q$, being $q$ odd and $p$ and $q$ coprime, the length  $\gamma_k(p/q)$  of the gap with label $k$ of the periodic problem $\Sigma(p/q)$ satisfies
 \[
 \min_{k}\gamma_k(1/q)>\min_{p,k}\gamma_k(p/q) > \min_{k}\gamma_k(2/q)> R^{-k}.
 \]
 \end{conjecture}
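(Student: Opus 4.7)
The plan rests on Chambers's formula for the critical AMO with rational $\omega=p/q$: the trace of the $q$-fold transfer matrix (\ref{eq:tracetransfer}) decomposes as a polynomial $P_{p/q}(a)$ in the energy minus a $\theta$-dependent term of the form $2\cos(2\pi q\theta)$, so that by Theorem \ref{res:periodicspectrum} the band endpoints are the roots of $P_{p/q}(a)=\pm 2$ and each gap is the maximal interval between two consecutive such roots on which $|P_{p/q}|>2$. All of the inequalities in the conjecture would be reduced to root-separation questions for this one-parameter family of degree-$q$ polynomials.

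I would address the universal lower bound first, aiming to improve the $8^{-q}$ estimate of \cite{Choi1990} toward the observed constant $\CEYeightpq$. Since the spectrum lies in $[-4,4]$, the $2q$ roots of $P_{p/q}^2-4$ are confined to a fixed interval, so the minimum root separation is controlled from below by a Mahler-type bound involving the discriminant of $P_{p/q}^2-4$. The goal would be to show that this discriminant grows no faster than $R^{q^2}$ uniformly in $p$, which by standard arguments yields a root separation of order $R^{-q}$. The sharpness of $R$ should come from combining the product formula of Chambers and Bellissard for $P_{p/q}$ with careful estimates on the oscillatory exponential sums appearing in it.

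The ordering inequalities among different residues $p$ would be handled by distinct arguments at the two extremes. For $p=1$ the potential samples $2\cos(2\pi\theta)$ monotonically around $\mT$, producing a Jacobi matrix close to a Chebyshev tridiagonal, and classical orthogonal-polynomial asymptotics should give lower bounds on gap length that decay only polynomially rather than exponentially. For $p=2$ the effective doubling of frequency amplifies tunnelling between bands, and an instanton-type estimate on the transfer matrix should identify the narrowest gap as the universal minimizer over $p$.

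The main obstacle, and presumably the reason the statement is offered as a conjecture despite the numerical evidence for $q\le\QmaxThouless$, is the comparison for generic $p$ sandwiched between the extremes. The polynomials $P_{p/q}$ and $P_{p'/q}$ for different $p,p'$ coprime to $q$ are related by a combinatorial rearrangement of the same underlying trigonometric factors indexed by the residues $\{jp\bmod q\}_j$, and there is no obvious monotonicity principle for how such permutations affect the discriminant. A convincing proof would likely need genuinely new arithmetic input on how these residue orbits cluster, combined with the trace-formula machinery above; without such an input, one can at best handle the two extreme cases individually.
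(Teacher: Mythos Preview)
The statement you are attempting to prove is explicitly labelled a \emph{conjecture} in the paper, and the paper offers no proof of it: the surrounding text says only that the inequalities have been \emph{checked numerically} for all odd $q\le\QmaxThouless$ using validated interval arithmetic. There is therefore no ``paper's own proof'' to compare your proposal against. What you have written is not a proof either, and to your credit you say so in your final paragraph; it is an outline of a possible attack together with an honest identification of the obstruction.

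A few comments on the outline itself. Your reduction to root-separation for $P_{p/q}^2-4$ via Chambers's formula is the natural starting point and is essentially how the $8^{-q}$ bound of \cite{Choi1990} is obtained, so that part is sound in spirit. However, your claim that for $p=1$ the minimal gap should decay only \emph{polynomially} in $q$ is not supported by the paper's data: Figure~\ref{fig:pq_gap_decay} (plotted on a logarithmic vertical scale) shows exponential decay of the minimal gap for \emph{every} $p$, including $p=1$; the distinction is only in the exponential rate. So an argument based on orthogonal-polynomial asymptotics would at best sharpen the rate, not change its exponential character. Your heuristic that $p=2$ should minimise because of ``effective doubling of frequency'' is plausible but would need to be made quantitative to separate $p=2$ from, say, $p=3$ or $p=(q-1)/2$.

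Finally, note that the conjecture as printed has some loose notation (the exponent $-k$ on the right should presumably be $-q$, and the strict inequality $\min_{p,k}\gamma_k(p/q)>\min_k\gamma_k(2/q)$ cannot hold if $p$ ranges over all residues including $2$); you have silently corrected the first of these in your reading, which is appropriate, but any eventual proof would need to pin down the intended statement precisely.
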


Figure \ref{fig:thouless_scaling} shows the Lebesgue measure of the periodic spectrum as a function of the rational approximation $p/q$, which is known to tend to zero when $p/q$ tends to an irrational frequency $\omega$ . The rate at which this limit tends to zero is related to a conjecture by Thouless \cite{thoulessScalingDiscreteMathieu1990} namely, for any sequence of coprime $p_n$ and $q_n$ such that $p_n/q_n \to \omega$ irrational one has
\begin{equation}\label{eq:thoulessconjecture}
\lim_{n} q_n \cdot |\Sigma_{p_n/q_n}| = G:=\frac{32 C}{\pi}
\end{equation}
where $C=0.915\ldots$ is Catalan's constant, see \cite{helfferTotalBandwidthRational1995a, jitomirskaya2019critical}. Figure \ref{fig:thouless_scaling} displays the measure of the spectrum of $\Sigma_{p/q}$ as a function of the rational approximation $p/q$. 

\begin{figure}
	\centering
	\includegraphics[width=0.99\linewidth]{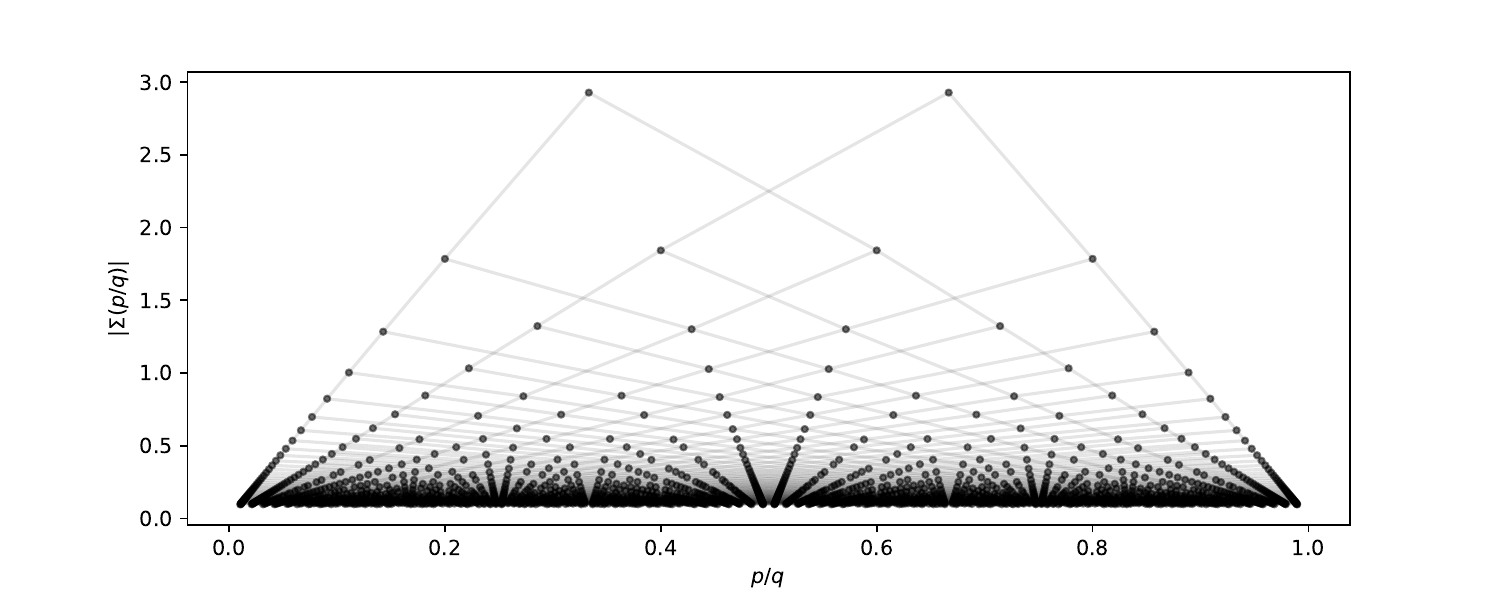}
	\includegraphics[width=0.99\linewidth]{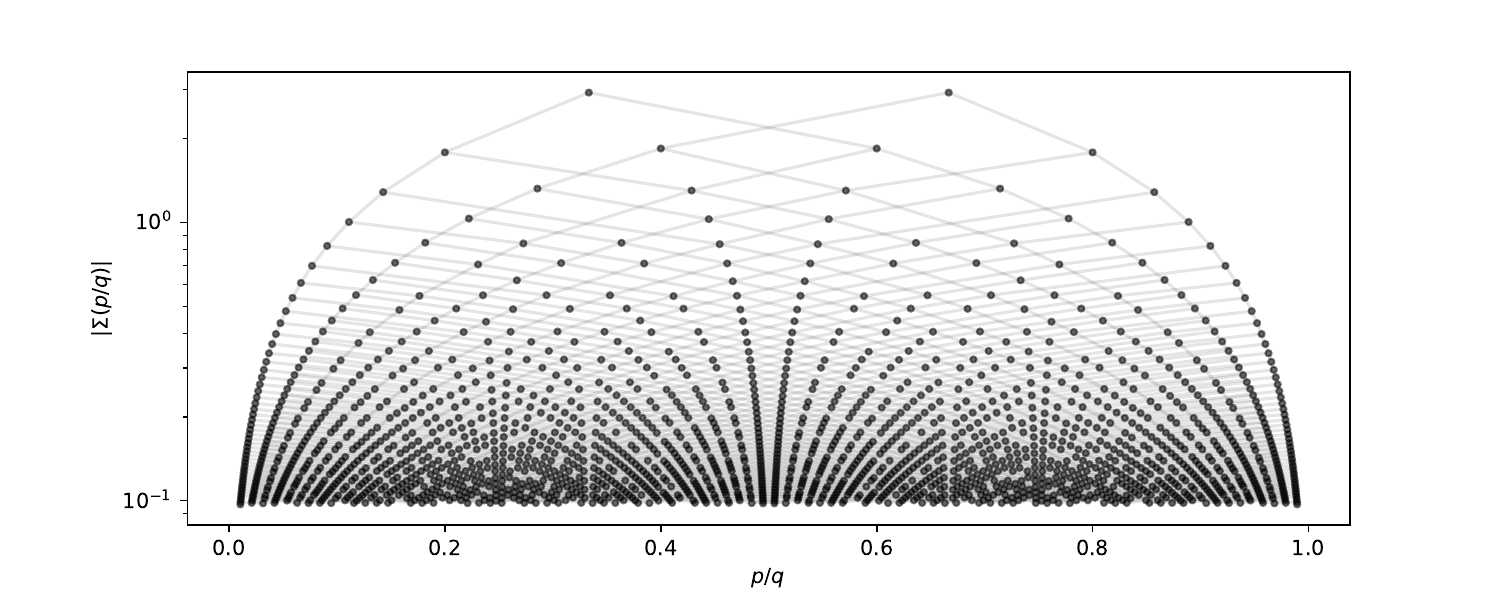}
	\caption{Measure of the AMO for rational frequencies as a function of the rational frequency $p/q$ for odd $q$ and $q\le \QmaxThouless$ (top) and the same with logarithmic y scale. Greylines connect dots whose frequency $p/q$ have the same $p$. It ressembles of Thomae's function, although the vertical values, the measures $\Sigma_{p,q}$ are not exactly the same for all values $p/q$ with the same $q$.}
	\label{fig:thouless_scaling}
\end{figure}

The convergence of the limit (\ref{eq:thoulessconjecture}) can also be seen in Figure \ref{fig:convergence_to_thouless} where the  value of the difference between  $32 C/\pi$ and   $q\cdot |\Sigma_{p/q}|$  is shown. In all the computed values, $32C/\pi$ was greater than $q\cdot |\Sigma_{p/q}|$. By checking the measures for each of the finite-dimesional matrices, we checked that the  following conjecture  was \ThoulessScalingConjecture  \phantom{ } for  $q\le \QmaxThouless$,

\begin {conjecture}
For any  $p$ and $q$  coprime, being $q$ odd, 
\[
\frac{32C}{\pi} > q\cdot |\Sigma_{p/q}|> q\cdot |\Sigma_{1/q}|>\sqrt{\frac{2}{q}}, p\ne 1, q-1. 
\]

\end{conjecture}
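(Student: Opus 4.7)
The conjecture bundles three distinct inequalities about the measure of the periodic spectrum $|\Sigma_{p/q}|$ for coprime $p$ and odd $q$, namely the uniform upper bound $q|\Sigma_{p/q}|<32C/\pi$, the extremal character of $p=1$ given by $|\Sigma_{1/q}|\le |\Sigma_{p/q}|$, and the explicit lower bound $q|\Sigma_{1/q}|>\sqrt{2/q}$. The natural common tool for all three is Chambers' formula, which at critical coupling states that the trace of the $q$-th iterate (\ref{eq:tracetransfer}) admits the splitting $\operatorname{tr}M_q(E,\theta)=\Delta_{p/q}(E)+2(-1)^{q-1}\cos(2\pi q\theta)$, where $\Delta_{p/q}(E)$ is a polynomial of degree $q$ in $E$ independent of $\theta$. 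Combining this with Theorem \ref{res:periodicspectrum} yields the useful identity $\Sigma_{p/q}=\{E:|\Delta_{p/q}(E)|\le 4\}$, which I would use as the starting point throughout.

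For the upper bound, my plan is to recast $|\Sigma_{p/q}|$ via the change of variables $u=\Delta_{p/q}(E)/4$ as an integral over the $q$ preimage branches, obtaining an expression of the form $\sum_{j=1}^{q}\int_{-1}^{1}|\Delta'_{p/q}|^{-1}\,du$. The asymptotic limit $32C/\pi$ has been established by Last, Helffer--Kerdelhu\'e and more recently via Riemann--Hilbert analysis by Krasovsky; I would attempt to make quantitative their error terms, showing the expansion is asymptotic from below with an explicit remainder that is strictly positive for all odd $q$. A useful consistency check is to match this remainder against our computed data for $q\le\QmaxThouless$.

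For the middle inequality, the case $p=1$ corresponds to the tridiagonal matrices (\ref{eq:periodicmatrix})--(\ref{eq:antiperiodicmatrix}) whose potential is sampled monotonically, so the band endpoints can be related by classical interlacing to the roots of Chebyshev-like polynomials. I would try to compare $\Delta_{1/q}$ to a general $\Delta_{p/q}$ by writing both as $E^q+\ldots$ and controlling the intermediate coefficients by the arithmetic of $p/q$; numerical evidence (the lower envelope in Figure \ref{fig:pq_gap_decay}) suggests that the coefficient structure for $p=1$ produces the most concentrated roots, hence the narrowest bands. The concrete lower bound $q|\Sigma_{1/q}|>\sqrt{2/q}$ would then follow from the explicit diagonalization of the AMO at frequency $1/q$, together with perturbation estimates on band widths near the $q$ resonance points $2\cos(2\pi j/q)$, where the gaps collapse at rate $O(1/q)$.

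The main obstacle is clearly the middle inequality: it amounts to a uniform monotonicity statement $p\mapsto|\Sigma_{p/q}|$ (with minimum at $p=1$, $q-1$) that is not, to my knowledge, implied by any of the known structural properties of the Hofstadter Hamiltonian, and the arithmetic of $p/q$ enters the coefficients of $\Delta_{p/q}(E)$ in a way that resists a direct comparison. A more modest but still meaningful intermediate target would be to prove the lower bound $q|\Sigma_{p/q}|>\sqrt{2/q}$ uniformly in $p$ (bypassing the extremality of $p=1$), which seems feasible purely from Chambers' formula and the degree of $\Delta_{p/q}$, and which would already suffice to control the rate in Thouless's conjecture from both sides.
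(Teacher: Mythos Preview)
The statement you are addressing is a \emph{conjecture} in the paper, not a theorem; the paper offers no proof whatsoever. What the paper does is verify the chain of inequalities by validated computation for all odd $q\le 95$ and then explicitly remarks that the general statement ``cannot be settled with the methods in the present paper''. So there is no proof in the paper to compare your proposal against.

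As for the proposal itself, it is an honest research outline rather than a proof, and you recognise this when you flag the middle inequality as the main obstacle. Two concrete gaps are worth naming. First, for the upper bound $q|\Sigma_{p/q}|<32C/\pi$: the results of Last and Helffer--Kerdelhu\'e you cite give the \emph{limit}, not a one-sided inequality valid for every finite $q$; turning their asymptotic analysis into a strict inequality with an explicit, uniformly signed remainder is exactly the open content of this part of the conjecture, and ``matching against our computed data'' cannot close it beyond $q\le 95$. Second, for the extremality of $p=1$: the observation that the potential is sampled monotonically when $p=1$ is correct, but the leap from ``most concentrated roots of $\Delta_{p/q}$'' to ``narrowest bands'' is not justified---the band widths depend on the derivative of $\Delta_{p/q}$ on the preimage of $[-4,4]$, not merely on root spacing, and no known comparison principle for these polynomials yields the claimed monotonicity in $p$. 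Your fallback target, proving $q|\Sigma_{p/q}|>\sqrt{2/q}$ uniformly in $p$ directly from Chambers' formula, is more realistic but is still not carried out in the proposal.
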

If such inequalities were shown to be true for every coprime $p$ and $q$  (which cannot be settled with the methods in the present paper), the analysis of this conjecture would be reduced to the consideration of the cases $p=1$ and $p=q-1$ \cite{helfferTotalBandwidthRational1995a}. See also the discussion in \cite{lastSumRuleDispersion1992a}.

\begin{figure}
\includegraphics[width=0.99\linewidth]{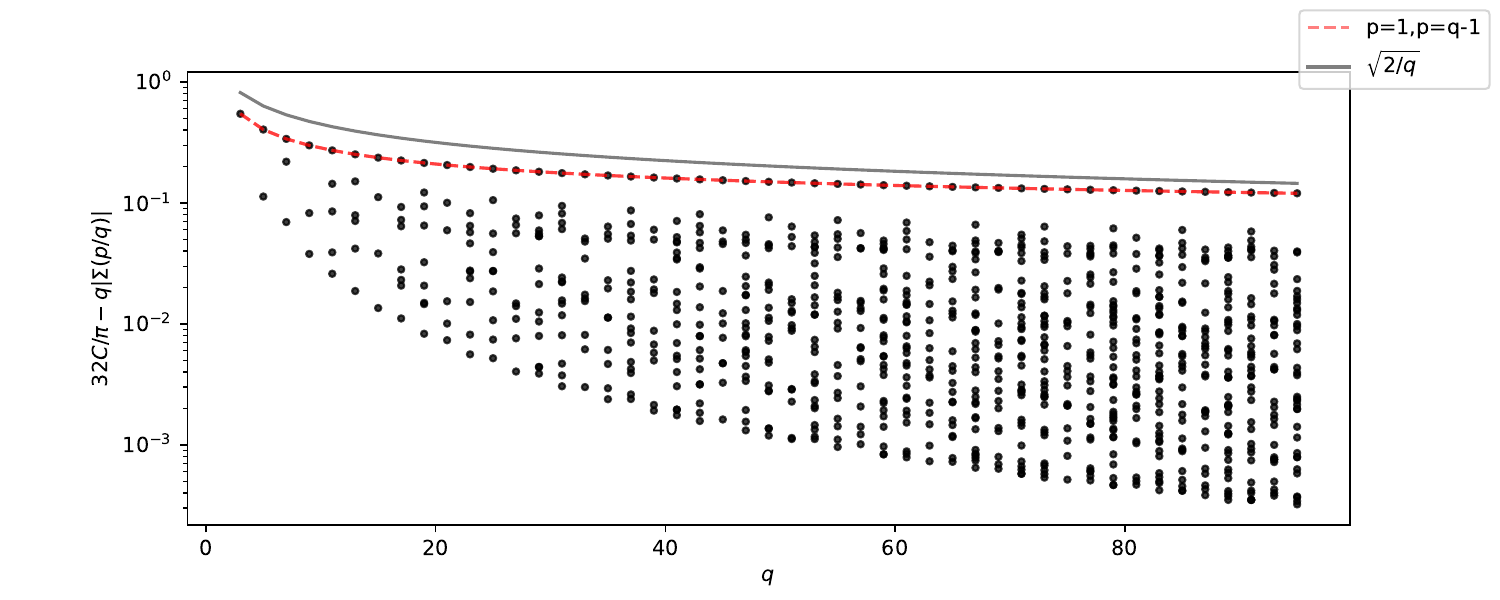}\\ 
\includegraphics[width=0.99\linewidth]{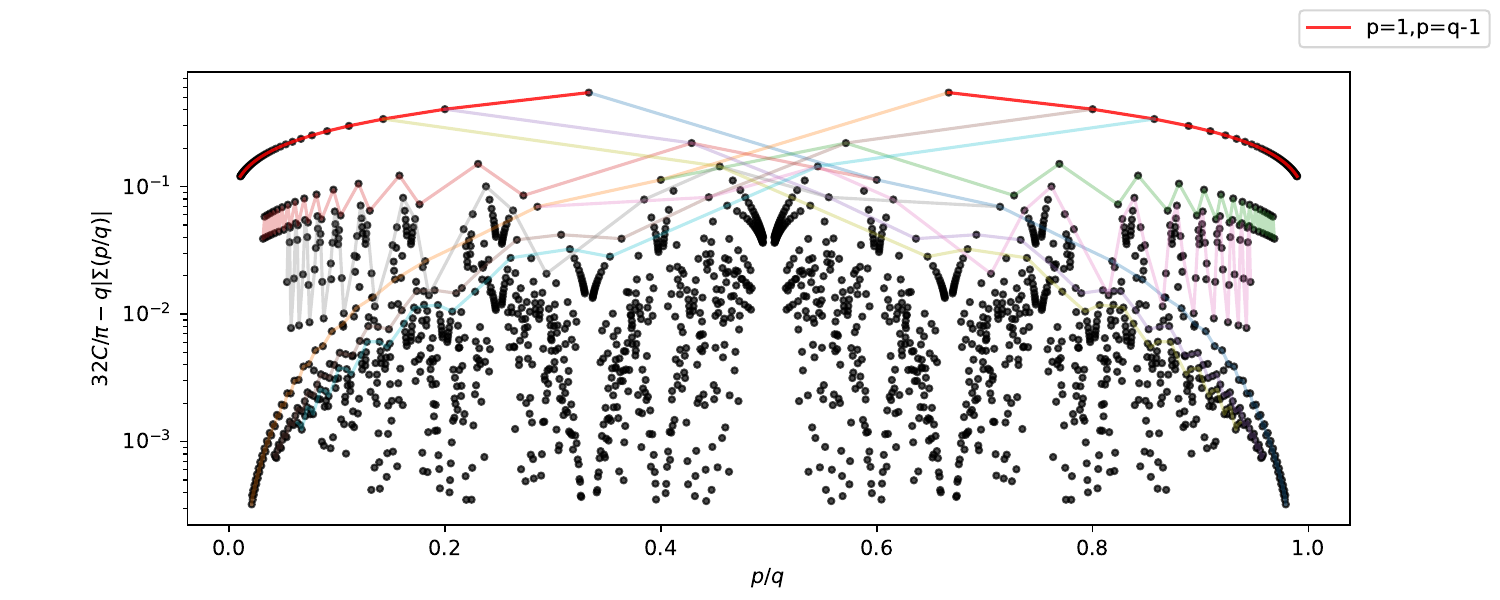}
\caption{Top: Difference $32C/\pi-q|\Sigma_{p/q}|$ for the as a function of $q$. Note that the slowest convergence is given exactly when $p=1$ or $p=q-1$, which seems to be faster than $\sqrt{2/q}$. Bottom: Same plot as a function of the rational approximations $p/q$. }
	\label{fig:convergence_to_thouless}
\end{figure}

Finally we wanted to investigate the Hölder continuity of the spectrum in Equation (\ref{eq:HolderK}) and obtain lower bounds given the exact computation of the spectrum for some rational $p/q$. We used the fact that, for even $q$, the spectrum of $\Sigma_{p/q}$ contains $0$, which is not in the spectrum when $q$ is odd. Figure \ref{fig:holder} displays, for every $p/q$ with $q$ the minimum positive value in the spectrum, which is precisely $d(0,\Sigma_{p/q})$ and allows to compute the constant in \ref{eq:HolderK}, proved to be lower that $60$ in \cite{Krasovsky2016}. For instance, computing the distance from $0$, which lies in the spectrum of $\Sigma_{1/2}$, with the rest of the spectrum we obtained that the constant needs to be larger than $\LowerBoundHolderConstant$.
\begin{figure}
    \centering
    \includegraphics[width=1\linewidth]{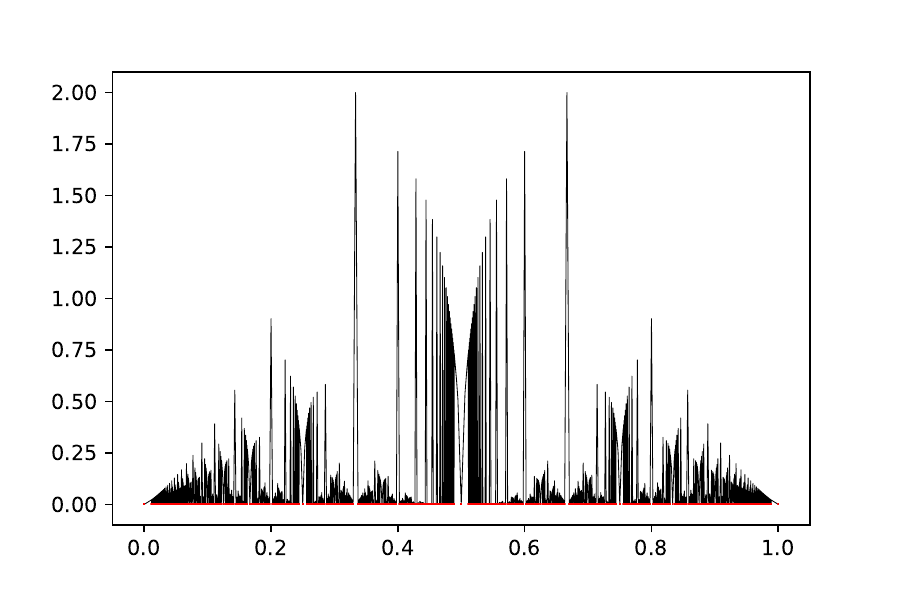}
    \caption{Plot of the minimum positive value in the spectrum $\Sigma_{p/q}$ for every odd $q \le \QmaxThouless$ as a function of the frequency $p/q$. Values in red correspond to points of the form $(j/k,0)$ with even $k$ and $gcd(j,k)=1$ which give a value of zero because $\{0\}$ is a collapsed gap for these frequencies with even $k$.}
    \label{fig:holder}
\end{figure}

\section{Discussion and conclusions}

In this paper we have examined two strategies for proving the existence of open gaps in the spectrum of the Almost Mathieu operator at critical coupling and some selected irrational frequencies. We have produced a number of gaps for selected frequencies, namely $\omega_1=(\sqrt{5}-1)/2$ and $\omega_2= e-2$.  Although the fact that these gaps were open was expected from previous numerical computations \cite{Hofstadter1976,papillon,Lamoureux2010,Satija2016}, the present paper shows that the use of validated numerics allows for rigorously determining this opening. Given the small differences in the eigenvalues of matrices involved, the use of extended precision and validated algorithms is convenient. Moreover, the lower bounds on the resolvent set seem to be quite realistic when compared to these    numerical values. The difference between the expected zero measure of the spectrum and the actual bound found with rigorous numerics, which is small but not zero, seems to be common in similar problems in dynamics. For instance, this has been observed in the AMO with $b=1$ and $3$ in \cite{Haro_Book_2016} and for circle maps in \cite{Figueras2020}.

Computer-assisted proofs  are increasingly used in all areas of mathematics and we hope that the constructive methods presented in this paper can be formalised with the use of proof assistants in the future.  

In this particular model, the Almost Mathieu operator, the spectral method seems to give slightly better results, although the overall performance, in terms of gaps and computing time, are similar. However, for more general models, we expect the dynamical method to perform better since Chambers formula, which reduces the computation of the periodic problem to simply two phases, $\theta_0=0$ or $\theta_{0}= 1/2q$, is specific of the Almost Mathieu operator and the spectral method would be much slower. The dynamical method can easily adapted to other sampling functions than the cosine. Extension to higher frequencies, moreover, is straightforward, although technically challenging by the implementation problems of higher-dimensional Fourier series. 

Although, to our knowledge, the opening of some specific gaps for the Almost Mathieu operator had not previously been shown, the results in the present paper give little clue on the opening of all gaps. Our main motivation in presenting them was twofold. On one hand, to show that it is possible to prove realistic bounds on the gaps of the Hofstadter Butterfly which account for most of what can be experimentally observed, both from a numerical point of view \cite{puig-simo:discrete} and from physical experiments \cite{Roushan2017b}. On the other, by being able to rigorously bound some gaps, one can test some of the inequalities and bounds on gaps, most notably the opening of all gaps, for some finite bound. In this paper, we have tested some of these conjectures like Thouless conjecture, the possible ordering of gaps according to their label (which we proved to be false in general) and the same could be done with analytic upper bounds for the spectrum such as in \cite{boca-zaharescu:norm}.

\section*{Acknowledgements}

The research of JP is supported by the grant PID-2021-122954NB-100 funded by MCIN/AEI (Spain) and “ERDF: A way of making Europe”.

\appendix

\section{Code and Data Availability}

The Python library to reproduce the results can be found in \cite{ValidatedHofstadter} and the computed data can be found in \cite{ValidatedDataset}.

\end{document}